\documentclass[a4paper,12pt,draft]{article}

\usepackage{amssymb}
\usepackage{latexsym}
\usepackage{amsfonts}
\usepackage{amsmath}
\usepackage{amsthm}
\usepackage{mathrsfs}
\usepackage{hyperref}
\usepackage{tensor}
\usepackage{mathtools}
\usepackage{mdwlist}
\usepackage{pgf,tikz,pgfplots}
\pgfplotsset{width=7.5cm,compat=1.9}

\usetikzlibrary{arrows,patterns,positioning}
\usetikzlibrary{calc}

\usepackage[margin=1in]{geometry}

\usepackage[compact]{titlesec}

\parskip 6pt
\linespread{0.95}

\usepackage[scale=0.9]{tgheros}
\usepackage[T1]{fontenc}

\DeclareMathOperator{\Aut}{Aut}

\DeclareMathOperator{\SL}{SL}

\DeclareMathOperator{\PSU}{PSU}

\DeclareMathOperator{\pg}{PG}
\DeclareMathOperator{\PGaU}{P\Gamma U}
\DeclareMathOperator{\PSp}{PSp}
\DeclareMathOperator{\PGaSp}{P\Gamma Sp}
\DeclareMathOperator{\PGaO}{P\Gamma O}

\DeclareMathOperator{\Sp}{Sp}

\DeclareMathOperator{\Sz}{Sz}

\DeclareMathOperator{\PGaL}{P\Gamma L}

\DeclareMathOperator{\GL}{GL}
\DeclareMathOperator{\GaL}{\Gamma L}

\DeclareMathOperator{\I}{I}
\DeclareMathOperator{\alt}{A}
\DeclareMathOperator{\s}{S}

\renewcommand{\leq}{\leqslant}
\renewcommand{\geq}{\geqslant}

\newcommand{\F}{\mathbb F}

\newcommand{\Q}{\mathcal Q}

\newcommand{\Qu}{\mathsf Q}
\newcommand{\W}{\mathsf W}
\renewcommand{\L}{\mathcal L}
\renewcommand{\H}{\mathsf H}
\renewcommand{\P}{\mathcal P}
\renewcommand{\S}{\mathcal S}
\renewcommand{\O}{\mathcal O}

\theoremstyle{plain}
\newtheorem{lemma}{Lemma}
\newtheorem{theorem}[lemma]{Theorem}
\newtheorem{proposition}[lemma]{Proposition}

\theoremstyle{definition}
\newtheorem{definition}[lemma]{Definition}
\newtheorem{example}[lemma]{Example}
\newtheorem{conjecture}[lemma]{Conjecture}
\newtheorem{question}[lemma]{Question}

\numberwithin{equation}{section}
\numberwithin{lemma}{section}

\begin{document}

\title{Neighbour-Transitive Codes and Partial Spreads in Generalised Quadrangles\thanks{This work has been supported by the Croatian Science Foundation under the projects 6732 and 5713.}}
\author{Dean Crnkovi{\'c}, Daniel R. Hawtin and Andrea {\v S}vob}
\date{
 \small{
  \emph{
   Department of Mathematics, University of Rijeka\\
   Rijeka, Croatia, 51000\\
  }
  \vspace{0.25cm}
  \href{mailto:deanc@math.uniri.hr}{deanc@math.uniri.hr}\hspace{0.5cm} \href{mailto:dan.hawtin@gmail.com}{dan.hawtin@gmail.com}\hspace{0.5cm} \href{mailto:asvob@math.uniri.hr}{asvob@math.uniri.hr}\\
  \vspace{0.25cm}
 }
\today
}

\maketitle

\begin{abstract}
 A code $C$ in a generalised quadrangle $\Q$ is defined to be a subset of the vertex set of the point-line incidence graph $\varGamma$ of $\Q$. The minimum distance $\delta$ of $C$ is the smallest distance between a pair of distinct elements of $C$. The graph metric gives rise to the distance partition $\{C,C_1,\ldots,C_\rho\}$, where $\rho$ is the maximum distance between any vertex of $\varGamma$ and its nearest element of $C$. Since the diameter of $\varGamma$ is $4$, both $\rho$ and $\delta$ are at most $4$. If $\delta=4$ then $C$ is a partial ovoid or partial spread of $\Q$, and if, additionally, $\rho=2$ then $C$ is an ovoid or a spread. A code $C$ in $\Q$ is neighbour-transitive if its automorphism group acts transitively on each of the sets $C$ and $C_1$. Our main results i) classify all neighbour-transitive codes admitting an insoluble group of automorphisms in thick classical generalised quadrangles that correspond to ovoids or spreads, and ii) give two infinite families and six sporadic examples of neighbour-transitive codes with minimum distance $\delta=4$ in the classical generalised quadrangle $\W_3(q)$ that are not ovoids or spreads. 
\end{abstract}

\section{Introduction}

Important generalisations of perfect codes were introduced in the 1970s, namely Semakov, Zinoviev and Zaitsev \cite{SemZinZai71} introduced \emph{uniformly packed codes} and Delsarte \cite{delsarte1973algebraic} defined the classes of \emph{completely regular} and \emph{$s$-regular} codes. Although it has been typical to study codes in the context of the Hamming or Johnson graphs, authors commonly study codes as subsets of the vertex set of more general classes of graphs (see for example \cite{Biggs1973289}). In this paper, we consider codes in the point-line incidence graphs of generalised quadrangles, in particular, those that satisfy certain symmetry conditions.

Let $\varGamma$ be a simple connected graph on finitely many vertices. For vertices $\alpha$ and $\beta$ of $\varGamma$ denote the graph distance between $\alpha$ and $\beta$ by $d(\alpha,\beta)$. A \emph{code} $C$ in $\varGamma$ is a subset of the vertex set of $\varGamma$, the elements of which are called \emph{codewords}. The \emph{minimum distance} $\delta$ of $C$ is the smallest distance, in the graph $\varGamma$, between two distinct codewords of $C$. The \emph{set of $i$-neighbours} of $C$, denoted $C_i$, is the subset of all those vertices $\gamma$ of $\varGamma$ for which there exists $\alpha\in C$ such that $d(\alpha,\gamma)=i$ and $d(\beta,\gamma)\geq i$ for all $\beta\in C$. The \emph{distance partition} of $C$ is then the partition $\{C,C_1,C_2,\ldots,C_\rho\}$ of the vertex set of $\varGamma$, where $\rho$ is the \emph{covering radius} of $C$, that is, the maximum value of $i$ for which $C_i$ is non-empty. The automorphism group $\Aut(C)$ of $C$ is the setwise stabiliser of $C$ inside $\Aut(\varGamma)$. We consider the following symmetry conditions:

\begin{definition}\label{sneighbourtransdef}
 Let $C$ be a code in a graph $\varGamma$ with covering radius $\rho$, and let $i\in\{1,\ldots,\rho\}$. Then $C$ is said to be
 \begin{enumerate}
  \item \emph{$i$-neighbour-transitive} if $\Aut(C)$ acts transitively on each of the sets $C,C_1,\ldots, C_i$,
  \item \emph{neighbour-transitive} if $C$ is $1$-neighbour-transitive, and,
  \item \emph{completely transitive} if $C$ is $\rho$-neighbour-transitive.
 \end{enumerate}
\end{definition}

Neighbour-transitive codes in Hamming graphs were first studied in Gillespie's PhD thesis \cite{neilphd}, as well as \cite{ntrcodes,gillespieCharNT,gillespiediadntc}. Investigating $2$-neighbour-transitive codes in Hamming graphs constitutes a major part of the second author's PhD thesis \cite{myphdthesis}; see also \cite{ef2nt,aas2nt,ondimblock,minimal2nt}. Completely transitive codes are a subclass of completely regular codes (see \cite{borges2019completely}) and completely transitive codes in Hamming graphs have been the subject of \cite{bailey2020classification,borges2001nonexistence,borges2010q,Borges201468,borges2019completely,Giudici1999647}. Neighbour-transitive codes in Johnson graphs have been considered in \cite{durante2014sets,liebler2014neighbour,neunhoffer2014sporadic} as well as Ioppolo's PhD thesis \cite{marksphd}. Note that there are open problems in all of the aforementioned areas of study.

From this point forward, we specialise to the case that $\varGamma$ is the point-line incidence graph of a generalised quadrangle $\Q$ and we say that a code in $\varGamma$ is a code in the generalised quadrangle $\Q$ (see Section~\ref{prelim} for background on generalised quadrangles). Note that since the diameter of $\varGamma$ is $4$ it follows that for any such code $C$ we have $\delta,\rho\leq 4$. We denote by $\Aut(\varGamma)$ the automorphism group of $\varGamma$ and note that the automorphism group $\Aut(\Q)$ of $\Q$ is a subgroup of $\Aut(\varGamma)$, with this inclusion being proper if and only if $\Q$ is isomorphic to its dual, in which case $\Aut(\Q)$ has index $2$ in $\Aut(\varGamma)$. 

Our main result, stated below, concerns neighbour-transitive codes in the classical generalised quadrangle $\W_3(q)$ (see Section~\ref{prelim} for the definition of $\W_3(q)$ and for more information concerning spreads of generalised quadrangles). 

\begin{theorem}\label{mainresult}
 Let $C$ be a neighbour-transitive code with minimum distance $4$ in the generalised quadrangle $\W_3(q)$. Then the following hold:
 \begin{enumerate}
  \item $C$ is equivalent to a regular spread if and only if $C$ has covering radius $2$.
  \item If $|C|=q^2$ then $C$ has covering radius $\rho=4$ and is equivalent to a spread minus a line. 
  \item If $|C|=q^2-1$ then $q=2,3,5,7$ or $11$ and $C$ is equivalent to one of the codes in Example~\ref{maxspreadconstr}, each of which has covering radius $\rho=3$.
  \item If $|C|=q+1$ and $C$ has covering radius $\rho=3$ then $C$ is equivalent to the set of points on a hyperbolic line.
  \item If $q=3$ and $|C|=5$ then $C$ has covering radius $\rho=3$ and is equivalent to the code given in Example~\ref{smallqequals3}.
 \end{enumerate}
\end{theorem}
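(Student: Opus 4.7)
Since the incidence graph $\varGamma$ of $\W_3(q)$ is bipartite of diameter $4$ and $\delta=4$, the code $C$ must lie entirely in one bipartite class, so $C$ is either a partial ovoid or a partial spread. The generalised quadrangle axiom forces $d(p,\ell)\in\{1,3\}$ for any point $p$ and line $\ell$, so when $C$ is a partial spread the sets $C_i$ are explicitly: $C_1$ is the set of points covered by some line of $C$; $C_2$ is the set of non-$C$ lines meeting at least one line of $C$; $C_3$ is the set of uncovered points; and $C_4$ is the set of lines skew to every line of $C$ (the partial-ovoid case is dual). In particular $\rho=2$ iff $C$ is a spread, $\rho=3$ iff $C$ is a maximal partial spread strictly smaller than a spread, and $\rho=4$ iff $C$ admits a proper extension. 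This identification of the $C_i$ is the common starting point for all five parts.

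For part~(1), $\rho=2$ forces every point to be covered, so $C$ is a spread, and the converse is immediate. Neighbour-transitivity then reduces to transitivity of $\Aut(C)$ on the spread and on the point set of $\W_3(q)$, so the conclusion follows from the classification of spreads of $\W_3(q)$ admitting a transitive automorphism group (established earlier in the paper) which identifies $C$ as a regular spread. For part~(2), $|C|=q^2$ leaves exactly $q+1$ uncovered points; an extendability result for partial spreads of deficiency one in $\W_3(q)$ (the uncovered points form a line of $\Q$) shows $C$ extends uniquely to a spread, with the removed line lying in $C_4$, so $\rho=4$. The induced transitive action of $\Aut(C)$ on the ambient spread, together with part~(1), forces the extension to be regular.

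For parts~(3), (4), and (5) the plan is a finer orbit analysis inside $\PGaSp_4(q)$, with passage through $\PGaL_4(q)$ for the partial-ovoid cases. The transitive action of $\Aut(C)$ on $C$ and on $C_1$ yields divisibility and order constraints, and the Aschbacher-class structure of maximal subgroups of $\PGaSp_4(q)$ restricts the subgroups capable of acting transitively on such small collections of pairwise skew lines or pairwise non-collinear points. Part~(4) I expect to follow from a point-stabiliser argument: a transitive partial ovoid of size $q+1$ in $\W_3(q)$ whose stabiliser contains an insoluble group is, up to equivalence, the set of $q+1$ points of a non-degenerate two-space in the polarity, i.e.\ the points of a hyperbolic line. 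Parts~(3) and (5) combine such restrictions with a short computer-algebra verification (say in \textsf{GAP}) of the remaining small cases to match the resulting codes with the listed examples.

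The main obstacle I anticipate is part~(3): reducing $q$ to $\{2,3,5,7,11\}$ requires ruling out, uniformly in $q$, transitive subgroups of $\PGaSp_4(q)$ that could act on $q^2-1$ pairwise skew lines of $\W_3(q)$, and this seems to require a case-by-case argument through the Aschbacher classes rather than a single uniform bound. The sporadic nature of the listed values strongly suggests that the residual small-$q$ analysis is handled computationally, with the group-theoretic reduction guaranteeing that no larger $q$ can arise.
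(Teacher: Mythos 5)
Your skeleton (identifying $\rho=2$ with spreads, $\rho\leq 3$ with maximality, using the transitive-spread classification for part 1, the deficiency-one extendability result for part 2, and computation for part 5) matches the paper's, but the proposal has one outright error and leaves the hardest step unproved. The error is in part 2: you claim the ambient spread is forced to be regular. Since the extension is unique, $\Aut(C)$ fixes the removed line, so it does \emph{not} act transitively on the ambient spread, and part 1 cannot be invoked because the ambient spread need not be neighbour-transitive as a code (its stabiliser need only be transitive on the remaining $q^2$ lines). The theorem deliberately asserts only ``a spread minus a line'', and the paper explicitly poses regularity of the ambient spread as an open question (Question 1.3). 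A second, smaller issue: for odd $q$ the quadrangle $\W_3(q)$ is not self-dual, so ``the partial-ovoid case is dual'' does not reduce a partial ovoid of $\W_3(q)$ to a partial spread of $\W_3(q)$; to rule out partial ovoids of sizes $q^2-1$ and $q^2$ one needs Tallini's bound that a partial ovoid of $\W_3(q)$, $q$ odd, has size at most $q^2-q+1$.

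The substantive gap is part 3, which you yourself flag as the main obstacle: an Aschbacher-class analysis of transitive subgroups of $\PGaSp_4(q)$ plus residual computation is not what the paper does, and you give no indication of how it would close. The paper's route is geometric, not group-theoretic: Tallini's bound forces $C$ to be a partial spread; the dual of De Beule's theorem on large maximal partial ovoids of $\Qu_4(q)$ forces $q$ prime; the Coolsaet--De Beule--Siciliano classification then shows every such partial spread of size $q^2-1$ is, in suitable coordinates, $\{\begin{bmatrix} I & A\end{bmatrix}\mid A\in G\}$ for a sharply transitive \emph{subset} $G$ of $\SL_2(q)$, with $\Aut(C)$ confined to $\SL_2(q)\wr \s_2$; neighbour-transitivity (transitivity of $\Aut(C)$ on $C$) forces $G$ to be a \emph{subgroup} of $\SL_2(q)$ of order $q^2-1$, and Dickson's subgroup classification yields exactly $q=2,3,5,7,11$. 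No computer search over $q$ is needed. Similarly for part 4, your plan imports an insolubility hypothesis that is not in the statement; the paper instead quotes the De Winter--Thas combinatorial classification of maximal partial ovoids of size $q+1$ as hyperbolic lines, so no group-theoretic stabiliser argument is required there.
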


Note that an example of a code as in part 2 of Theorem~\ref{mainresult} is given, for each $q$, by a regular spread of $\W_3(q)$ with one line removed; see Lemma~\ref{regspreadminusline}. We do not prove here that these are all such examples. However, since the automorphism group of such a code must have order divisible by $q^2(q+1)$, \cite[Table~1]{penttila2000ovoids} suggests that perhaps this is the case. Hence, we pose the following question.

\begin{question}
 Do there exist codes as in part 2 of Theorem~\ref{mainresult} that are not contained in a regular spread?
\end{question}

Lemma~\ref{ovoidorspread} shows that a code having minimum distance $\delta=4$ and covering radius $\rho\leq 3$ is a maximal partial ovoid or a maximal partial spread. In regards to neighbour-transitive maximal partial ovoids and maximal partial spreads in $\W_3(q)$ we make the following conjecture, which has been confirmed up to $q=7$ by computations performed in MAGMA \cite{MR1484478} and in GAP \cite{GAP4} using the package FinInG \cite{fining}.

\begin{conjecture}
 Let $C$ be a neighbour-transitive maximal partial ovoid or maximal partial spread of $\W_3(q)$. Then $C$ is as in Theorem~\ref{mainresult} parts 1,3,4 or 5.
\end{conjecture}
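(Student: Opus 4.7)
The plan is to combine Theorem~\ref{mainresult} with a converse to Lemma~\ref{ovoidorspread}. The first step I would carry out is to establish that a maximal partial ovoid or maximal partial spread in $\W_3(q)$ is precisely a code with $\delta = 4$ and $\rho \leq 3$. The ``$\Leftarrow$'' direction is exactly Lemma~\ref{ovoidorspread}. For ``$\Rightarrow$'', I would argue by contradiction: if $\rho = 4$, there exists a vertex $v$ of $\varGamma$ at graph-distance $4$ from every codeword; since $\varGamma$ is bipartite and both endpoints are at even distance, $v$ is necessarily of the same type (line or point) as the codewords, and since it is not at distance $2$ from any codeword it fails to meet, respectively be collinear with, any codeword. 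Hence $C \cup \{v\}$ is a strictly larger partial spread or ovoid, contradicting maximality.

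Granted this equivalence, any $C$ satisfying the hypothesis of the conjecture is a neighbour-transitive code in $\W_3(q)$ with $\delta = 4$ and $\rho \leq 3$, to which Theorem~\ref{mainresult} applies. Part~2 of that theorem is immediately excluded because it has $\rho = 4$, so $C$ must be equivalent to a code described in part~1, 3, 4, or 5, giving the conjecture.

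The hard part will be to interpret Theorem~\ref{mainresult} as genuinely exhaustive for codes with $\delta = 4$ and $\rho = 3$. Parts~3, 4, and~5 each characterise codes of a specific cardinality ($q^2 - 1$, $q + 1$, and $5$ for $q = 3$), so to complete the reduction one must additionally rule out a neighbour-transitive code with $\delta = 4$, $\rho = 3$, and any other cardinality. My approach here would exploit the transitive action of $\Aut(C)$ on both $C$ and $C_1$, noting that $\delta = 4$ forces $|C_1| = (q+1)|C|$, together with the classification of transitive subgroups of $\Aut(\W_3(q)) = \PGaSp_4(q)$ on points and lines. A dichotomy based on whether $\soc(\Aut(C))$ is soluble or insoluble seems most natural: the insoluble case should yield to the CFSG-based methods already used for ovoids and spreads in the paper's main classification, while the soluble case would probably need combinatorial orbit counts combined with geometric constraints on partial ovoids and spreads. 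The computational verification reported for $q \leq 7$ via \textsc{Magma} and \textsc{FinInG} gives strong evidence that no intermediate cardinalities occur, but a uniform argument for all $q$ seems to demand work genuinely beyond Theorem~\ref{mainresult}.
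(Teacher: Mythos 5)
The statement you are asked to prove is labelled a \emph{conjecture} in the paper: the authors offer no proof, only computational verification for $q\leq 7$ via \textsc{Magma} and \textsc{FinInG}. So there is no argument of theirs to compare yours against. Your first step --- the equivalence between maximal partial ovoids/spreads and codes with $\delta=4$ and $\rho\leq 3$ --- is correct, but note that it is already available: part 2 of Lemma~\ref{ovoidorspread} is stated as an ``if and only if'', so you are re-deriving the forward direction rather than supplying a missing converse. Given that equivalence, your observation that part 2 of Theorem~\ref{mainresult} is excluded by $\rho=4$ is also correct.

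The genuine gap is exactly the one you name yourself: Theorem~\ref{mainresult} only classifies neighbour-transitive codes of the specific sizes $q^2$, $q^2-1$, $q+1$ (with $\rho=3$), $5$ (for $q=3$), and spreads/ovoids with $\rho=2$. It is silent about a hypothetical neighbour-transitive maximal partial spread or ovoid of any other cardinality, so the conjecture does not follow from the theorem by elimination. Your proposed route to close this --- a soluble/insoluble dichotomy for $\Aut(C)\leq\PGaSp_4(q)$, orbit counts using $|C_1|=(q+1)|C|$, and the known bounds on sizes of maximal partial ovoids and spreads --- is a reasonable research programme, but as written it is a plan rather than a proof, and no step of it is carried out. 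In short: your reduction is sound as far as it goes, you have correctly diagnosed why the statement is open, and what remains unproved in your proposal is precisely what remains unproved in the paper.
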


In Theorem~\ref{noperfectcodes} we prove that there are no perfect codes in generalised quadrangles that admit an automorphism group acting transitively on the code. The proof of Theorem~\ref{noperfectcodes} implies that a code with minimum distance $\delta=3$ in a generalised quadrangle of order $s$ has size at most $2(s^2-s+1)$ and we leave open the question as to whether such a code exists.

\begin{question}\label{maxcodedelta3}
 Does there exist a code of size $2(s^2-s+1)$ with minimum distance $3$ in a generalised quadrangle of order $s$?
\end{question}

In the next section we introduce the concepts and terminology we shall use regarding generalised quadrangles and codes. In Section~\ref{codessect} we consider, more generally, codes in the point-line incidence graphs of generalised quadrangles. In Section~\ref{ovoidssect} we classify neighbour-transitive ovoids and spreads in thick classical generalised quadrangles, and in Section~\ref{w3qsect} we focus on neighbour-transitive partial ovoids and partial spreads in the classical generalised quadrangle $\W_3(q)$, proving Theorem~\ref{mainresult}.

\section{Preliminaries}\label{prelim}

A \emph{generalised quadrangle} is an incidence structure $\Q=(\P,\L,\I)$, where $\P$ and $\L$ are disjoint non-empty sets called points and lines, respectively, and $\I$ is a symmetric point-line incidence relation such that:
\begin{enumerate}
 \item Each point is incident with $t+1$ lines ($t\geq 1$) and two distinct points are incident with at most one line.
 \item Each line is incident with $s+1$ points ($s\geq 1$) and two distinct lines are incident with at most one point.
 \item If $p$ is a point and $L$ is a line not incident with $p$, then there is a unique pair $(q,M)\in\P\times \L$ for which $p\,\I\,M\,\I\,q\,\I\,L$.
\end{enumerate}
For more comprehensive background on generalised quadrangles than is covered here we refer the reader to \cite{paynefinite}. A generalised quadrangle $\Q$ satisfying the above axioms is said to have order $(s,t)$, or simply order $s$ when $s$ and $t$ are equal. A generalised quadrangle is called \emph{thick} if $s,t\geq 2$. \emph{Classical} generalised quadrangles are associated with certain simple groups; see \cite[Chapter 3]{paynefinite} for their constructions and \cite[Sections~3.5.6 and 3.6.4]{wilson2009finite} for more about their automorphism groups; see also Table~\ref{table:classGQs}. Note that the \emph{dual} $\Q^D$ of a generalised quadrangle $\Q$ of order $(s,t)$, obtained by interchanging the role of points and lines (that is, $\Q^D=(\L,\P,\I)$), is a generalised quadrangle having order $(t,s)$. Isomorphisms of generalised quadrangles are defined as incidence preserving bijections between points/lines. A generalised quadrangle is said to be \emph{self-dual} if it is isomorphic to its dual.

We are interested here, in particular, with the classical generalised quadrangle $\W_3(q)$, which has order $q$. If $V$ is the underlying vector space of the projective geometry $\pg_3(q)$ equipped with a non-degenerate symplectic bilinear form $f$ then points of $\W_3(q)$ are the points of $\pg_3(q)$ and the lines of $\W_3(q)$ are the totally isotropic $2$-dimensional subspaces of $V$, that is, the $2$-dimensional subspaces $X\leq V$ such that $f(x,y)=0$ for all $x,y\in X$; incidence is given by symmetrised containment. If $q$ is even then $\W_3(q)$ is self dual, while if $q$ is odd then the dual of $\W_3(q)$ is $\Qu_4(q)$.

Let $\Q$ be a generalised quadrangle. Then the point-line incidence graph $\varGamma$ of $\Q$ is the graph with vertex set $V\varGamma=\P\cup \L$ with adjacency given by incidence. The graph $\varGamma$ is bipartite, has degrees $s+1$ and $t+1$, diameter $4$ and girth $8$. Let $d$ be the graph metric. For a vertex $\alpha\in\varGamma$, we denote the set of $i$-neighbours of $\alpha$ by $\varGamma_i(\alpha)=\{\beta\in V\varGamma\mid d(\alpha,\beta)=i\}$. As in the introduction, a code in a generalised quadrangle is subset of $V\varGamma$, the minimum distance and covering radius of a code are also defined there. We say that a code $C$ is \emph{trivial} if either $|C|\leq 1$ or $C$ consists of the entire vertex set of the graph in which it is defined. Two codes are \emph{equivalent} if there exists an automorphism of $\varGamma$ mapping one to the other. Note that equivalent codes have the same minimum distance, covering radius and isomorphic automorphism groups.

An \emph{ovoid} of a generalised quadrangle $\Q=(\P,\L,\I)$ is a subset $\O$ of $\P$ such that each line $\ell\in\L$ is incident with exactly one point of $\O$, and a \emph{partial ovoid} is a subset $\O$ of $\P$ such that each line $\ell\in\L$ is incident with at most one point of $\O$. Dually, a \emph{spread} of a generalised quadrangle $\Q=(\P,\L,\I)$ is a subset $\S$ of $\L$ such that each point $p\in\P$ is incident with exactly one line of $\S$, and a \emph{partial spread} is a subset $\S$ of $\L$ such that each point $p\in\P$ is incident with at most one line of $\S$. A spread or an ovoid is called \emph{maximal} if it is not contained in a larger spread or ovoid, respectively. A spread is \emph{regular} or \emph{Desarguesian} if it may be obtained via \emph{field reduction}, that is, comes from an embedding of $\pg_1(q^2)$ into $\pg_3(q)$.

%

\begin{table}
 \begin{center}
 \begin{tabular}{cccc}
  $\Q$ & order & $\Q^D$ & $\Aut(\Q)$ \\
  \hline
  ${\mathsf Q}_4(q)$ & $(q,q)$ & ${\mathsf W}_3(q)$ & $\PGaO_5(q)$ \\
  ${\mathsf Q}^-_5(q)$ & $(q,q^2)$ & ${\mathsf H}_3(q^2)$ & $\PGaO^-_6(q)$ \\
  ${\mathsf W}_3(q)$ & $(q,q)$ & ${\mathsf Q}_4(q)$ & $\PGaSp_4(q)$ \\
  ${\mathsf H}_3(q^2)$ & $(q^2,q)$ & ${\mathsf Q}^-_5(q)$ & $\PGaU_4(q)$ \\
  ${\mathsf H}_4(q^2)$ & $(q^2,q^3)$ & ${\mathsf H}_4(q^2)^D$ & $\PGaU_5(q)$ \\
  \hline
 \end{tabular}
 \caption{Thick classical generalised quadrangles, their orders, duals, and automorphism groups. Note that $W_3(q)$ is self-dual when $q$ is even. }
 \label{table:classGQs}
 \end{center}
\end{table}

\section{Codes in generalised quadrangles}\label{codessect}

Let $\varGamma$ be the point-line incidence graph of a generalised quadrangle $\Q$ and let $C$ be a code in $\Q$. Since the diameter of $\varGamma$ is $4$, we have that the covering radius of $C$ satisfies $\rho\leq 4$. In Sections~\ref{ovoidssect} and~\ref{w3qsect} we focus on codes $C$ with minimum distance $4$, in which case $C$ is a partial ovoid or a partial spread of $\Q$; see Lemma~\ref{ovoidorspread}. However, we first consider codes with smaller minimum distance. Note that a neighbour-transitive code $C$ necessarily admits an automorphism group that acts transitively on $C$ and also that the only known self-dual generalised quadrangles are the translation quadrangles $T_2(\O)$ (see \cite{payne2012essay}).

\begin{lemma}\label{selfdaullemma}
 Let $C$ be a code in a generalised quadrangle $\Q$ with odd minimum distance $\delta$ and suppose that $\Aut(C)$ acts transitively on $C$. Then $\Q$ is self-dual.
\end{lemma}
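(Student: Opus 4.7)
The plan is to exploit the bipartite structure of the incidence graph $\varGamma$, whose two parts are the point set $\P$ and the line set $\L$. Since $\varGamma$ is bipartite, the distance between two vertices in the same part is even, while the distance between a point and a line is odd. Consequently, if $C$ has odd minimum distance $\delta$, then there exist $\alpha,\beta\in C$ with $d(\alpha,\beta)=\delta$ odd, forcing one of $\alpha,\beta$ to lie in $\P$ and the other in $\L$. Hence $C$ contains at least one point and at least one line.

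Next I would invoke the transitivity hypothesis: since $\Aut(C)$ acts transitively on $C$, there exists some $g\in\Aut(C)\leq\Aut(\varGamma)$ which maps a point-codeword to a line-codeword. Thus $g$ does not preserve the bipartition $\{\P,\L\}$ of $\varGamma$, i.e.\ $g\in\Aut(\varGamma)\setminus\Aut(\Q)$, so $\Aut(\Q)$ is a proper subgroup of $\Aut(\varGamma)$.

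Finally, I would cite the observation made in the introduction that $\Aut(\Q)\leq\Aut(\varGamma)$ with the inclusion being proper if and only if $\Q$ is self-dual (equivalently, the only automorphisms of $\varGamma$ that can possibly swap the parts of the bipartition are the dualities of $\Q$). The existence of the element $g$ from the previous step therefore forces $\Q$ to be self-dual, completing the proof. The argument is essentially immediate once the bipartite parity observation is made; there is no real obstacle here, only the need to state the three ingredients (bipartiteness and parity of distance, the transitive action producing a part-swapping automorphism, and the equivalence between dualities and part-swapping automorphisms of $\varGamma$) cleanly.
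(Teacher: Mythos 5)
Your proposal is correct and follows essentially the same route as the paper's proof: use bipartiteness to deduce that odd minimum distance forces $C$ to contain both a point and a line, then use transitivity of $\Aut(C)$ to produce an automorphism of $\varGamma$ swapping the two parts, which is precisely a duality of $\Q$. The paper states this in three sentences; your version merely spells out the same three ingredients in more detail.
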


\begin{proof}
 Since $\delta$ is odd, there exists a point $p$ and a line $\ell$ of $\Q$ such that $p,\ell\in C$. Since $\Aut(C)$ acts transitively on $C$, it follows that there exists an automorphism of $\varGamma$ interchanging the set of points and the set of lines of $\Q$. Hence $\Q$ is self-dual.
\end{proof}

A code $C$ is \emph{perfect} if the covering radius $\rho$ of $C$ satisfies $\rho=e$, where $e$ is the \emph{error-correction capacity} of $C$, that is,
\[
 \rho=e=\left\lfloor \frac{\delta-1}{2}\right\rfloor.
\]
This is equivalent to the condition that the set of balls of radius $\rho$ centred at each codeword of $C$ form a partition of the vertex set of $\varGamma$, which immediately implies that $\delta$ is odd. We have the following theorem.

\begin{theorem}\label{noperfectcodes}
 There are no non-trivial perfect codes in generalised quadrangles admitting an automorphism group that acts transitively on the code.
\end{theorem}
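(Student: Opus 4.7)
The plan is to argue by contradiction, exploiting the parity forced by perfectness together with Lemma~\ref{selfdaullemma}. Since the diameter of $\varGamma$ equals $4$ and perfectness forces $\delta$ to be odd, the only non-trivial case is $\delta=3$ (which gives $\rho=e=1$); the case $\delta=1$ yields $\rho=0$ and hence $C=V\varGamma$, which is trivial.

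Assume then $\delta=3$ and $\rho=1$. By Lemma~\ref{selfdaullemma}, $\Q$ is self-dual, so $s=t$. Since $\varGamma$ is bipartite and $\delta$ is odd, $C$ must contain both points and lines, and transitivity of $\Aut(C)$ then produces a duality swapping $C\cap\P$ with $C\cap\L$; write $a$ for their common size. Perfectness gives $|C|(s+2)=2(s+1)(s^2+1)$, that is, $a(s+2)=(s+1)(s^2+1)$, and reducing modulo $s+2$ (using $s\equiv-2$, $s+1\equiv-1$, $s^2+1\equiv 5$) forces $(s+2)\mid 5$, hence $s=3$ and $a=8$.

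To eliminate the remaining case I would use a short incidence double count. For each non-codeword point $q$, perfectness places $q$ on a unique line of $C\cap\L$; the other $t=3$ lines through $q$ lie outside $C\cap\L$, and perfectness applied to each of them forces it to contain a unique point of $C\cap\P$, so $q$ is collinear with exactly three points of $C\cap\P$. Now count pairs $(q,\{p_1,p_2\})$ with $q$ a non-codeword point and $p_1,p_2\in C\cap\P$ both collinear with $q$: from $q$'s side, $32\cdot\binom{3}{2}=96$; from the pair's side, any two distinct points of the partial ovoid $C\cap\P$ are non-collinear and so share exactly $t+1=4$ common collinear points, necessarily outside $C\cap\P$, yielding $\binom{8}{2}\cdot 4=112$. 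The discrepancy $96\neq 112$ is the desired contradiction.

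The hard part is exactly this $s=3$ case: integrality combined with transitivity removes every other order, but at $s=3$ no further arithmetic obstruction survives and one has to argue combinatorially. The attractive feature of the proposed double count is that it uses only perfectness, the partial-ovoid property of $C\cap\P$, and the standard GQ identity giving $t+1$ common collinear points for any two non-collinear points, so no classification of generalised quadrangles of order $3$ is required.
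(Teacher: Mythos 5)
Your proof is correct, and it diverges from the paper's at precisely the decisive step, so the comparison is worth spelling out. Both arguments reduce to $\delta=3$, $\rho=1$, invoke Lemma~\ref{selfdaullemma} to get a self-dual quadrangle of order $s$, and derive the sphere-packing identity $|C|(s+2)=2(s^3+s^2+s+1)$. The paper then writes $|C|=2(s^2-s+1)-\frac{2}{s+2}$ and concludes from integrality that $s=0$; but that partial-fraction identity is incorrect: the true one is $\frac{2(s^3+s^2+s+1)}{s+2}=2s^2-2s+6-\frac{10}{s+2}$, so integrality only forces $(s+2)\mid 10$ and leaves $s=3$ and $s=8$ alive (with $|C|=16$ and $|C|=117$ respectively). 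Your two extra observations are exactly what is needed to close this gap. First, the duality produced by transitivity gives $|C\cap\P|=|C\cap\L|=a$ and hence $a(s+2)=(s+1)(s^2+1)$, sharpening the condition to $(s+2)\mid 5$ and eliminating $s=8$ (equivalently, $|C|=117$ is odd, contradicting the equal split). Second, the double count at $s=3$ is sound: each of the $32$ non-codeword points is collinear with exactly $3$ points of $C\cap\P$ (the three distinct codeword points on its three non-codeword lines, the codeword line through it carrying none), giving $32\cdot\binom{3}{2}=96$, while the $\binom{8}{2}$ non-collinear pairs in the partial ovoid $C\cap\P$ each have exactly $t+1=4$ common neighbours, all outside $C\cap\P$, giving $112$. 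So your argument is not merely an alternative route to the same conclusion; it repairs an arithmetic slip in the published proof, and some version of your parity refinement and $s=3$ double count (or another ad hoc argument for that case) is genuinely required for the theorem to be established.
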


\begin{proof}
 Suppose $C$ is a perfect code in a generalised quadrangle $\Q$ such that $\Aut(C)$ acts transitively on $C$. Since a perfect code necessarily has odd minimum distance, and the diameter of the point-line incidence graph $\varGamma$ is $4$, we have that either $\delta=1$ and $\rho=0$, or $\delta=3$ and $\rho=1$. If $\delta=1$ and $\rho=0$ then $C$ is trivial. Lemma~\ref{selfdaullemma} implies that $\Q$ is self-dual, and hence has order $s\geq 1$. In particular, $\Q$ has $s^3+s^2+s+1$ points, and the same number of lines, so that $\varGamma$ has $2(s^3+s^2+s+1)$ vertices. Since $C$ is perfect, $\{C,C_1\}$ forms a partition of the vertex set of $\varGamma$ so that
 \[
  |C|+|C_1|=2(s^3+s^2+s+1).
 \]
 Since each element of $C_1$ is incident with a unique element of $C$, and each element of $C$ is incident with $s+1$ elements of $C_1$, the left-hand side of the above equation becomes $|C|(s+2)$. Thus
 \[
  |C|=2\frac{s^3+s^2+s+1}{s+2}=2(s^2-s+1)-\frac{2}{s+2}.
 \]
 Since the size of the code must be an integer, this implies that $s=0$, giving a contradiction.
\end{proof}



We will often use the following result, or its dual, without reference in order to prove that a code $C$ is neighbour-transitive.

\begin{lemma}\label{lemma:Xptrans}
 Let $C$ be a code in a generalised quadrangle $\Q$ having minimum distance $\delta=3$ or $4$ and assume the point $p$ is in $C$. Then $C$ is neighbour-transitive if and only if $\Aut(C)$ acts transitively on $C$ and the stabiliser $\Aut(C)_p$ of $p$ acts transitively on the set of all lines incident with $p$.
\end{lemma}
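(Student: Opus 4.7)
The plan is to introduce an $\Aut(C)$-equivariant ``foot'' map $\phi\colon C_1\to C$ sending each $1$-neighbour $\gamma$ to the unique codeword adjacent to it in $\varGamma$. This map is well-defined because any two codewords adjacent to a common vertex would be at mutual distance at most $2$, contradicting $\delta\geq 3$. By the same reasoning, no codeword is adjacent to another codeword, so every graph neighbour of a codeword $\alpha\in C$ lies in $C_1$, and hence $\phi^{-1}(\alpha)=\varGamma_1(\alpha)$. In particular, $\phi^{-1}(p)$ is exactly the set of lines incident with $p$.

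For the forward direction, suppose $C$ is neighbour-transitive. Then $\Aut(C)$ is transitive on $C$ by definition, so I only need to verify the stabiliser condition. Given lines $\ell_1,\ell_2$ through $p$, both belong to $\phi^{-1}(p)\subseteq C_1$, so transitivity on $C_1$ yields $g\in\Aut(C)$ with $g(\ell_1)=\ell_2$; equivariance of $\phi$ then forces $g(p)=g(\phi(\ell_1))=\phi(\ell_2)=p$, so $g\in\Aut(C)_p$, as required.

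For the converse, assume $\Aut(C)$ is transitive on $C$ and that $\Aut(C)_p$ is transitive on $\phi^{-1}(p)$. Transporting by an element of $\Aut(C)$ sending $p$ to $\alpha$, the stabiliser $\Aut(C)_\alpha$ is transitive on $\phi^{-1}(\alpha)=\varGamma_1(\alpha)$ for every $\alpha\in C$. Given $\gamma_1,\gamma_2\in C_1$, I would first pick $g\in\Aut(C)$ with $g(\phi(\gamma_1))=\phi(\gamma_2)$, and then compose with an element of $\Aut(C)_{\phi(\gamma_2)}$ sending $g(\gamma_1)$ to $\gamma_2$, giving transitivity of $\Aut(C)$ on $C_1$.

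The one genuine subtlety, and the step I expect to require a little care, concerns the case $\delta=3$: since distances between two points (or between two lines) of $\varGamma$ are always even, a code of odd minimum distance necessarily contains both points and lines, and transitivity on $C$ forces some elements of $\Aut(C)$ to swap the two bipartite halves of $\varGamma$. Thus the ``lines through $p$'' hypothesis is transported by conjugation to a ``points on $\alpha$'' statement whenever $\alpha\in C$ is a line, and one should note that $\phi$ is defined uniformly on both halves so the argument above still goes through without alteration.
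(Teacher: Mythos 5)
Your proof is correct and follows essentially the same route as the paper: your equivariant ``foot'' map $\phi$ is precisely the paper's observation that $\{\varGamma_1(\alpha)\mid\alpha\in C\}$ partitions $C_1$ when $\delta\geq 3$, and both arguments then shuttle between transitivity on $C_1$ and transitivity of the point stabiliser on its fibre in the same way. Your closing remark on the $\delta=3$ case (where $C$ meets both bipartite halves and the fibre over a line consists of points) is a worthwhile clarification that the paper leaves implicit.
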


\begin{proof}
 Since $\Aut(C)$ acts transitively on $C$ on either side of the `if and only if' statement we may assume this holds. Since $\delta\geq 3$ it follows that every element of $C_1$ is distance $1$ from a unique codeword and hence $\{\varGamma_1(\alpha)\mid \alpha\in C\}$ forms a partition of $C_1$. In particular, any automorphism mapping an element $\beta_1\in C_1$ to $\beta_2\in C_1$ must also map the unique codeword $\alpha_1$ at distance $1$ from $\beta_1$ to the unique codeword $\alpha_2$ at distance $1$ from $\beta_2$. As $\Aut(C)$ acts transitively on $C$, we then have that $\Aut(C)_p$ acts transitively on the set of all lines incident with $p$ if and only if $\Aut(C)$ acts transitively on $C_1$, and the result holds.
\end{proof}

We now consider codes with minimum distance $\delta=4$.

\begin{lemma}
 Let $C$ be a neighbour-transitive code with minimum distance $\delta= 4$ in a generalised quadrangle $\Q$ of order $(s,t)$. Then each of the following hold:
 \begin{enumerate}
  \item $|C_1|=(t+1)|C|$.
  \item $|C|+|C_2|+|C_4|=(s+1)(st+1)$.
  \item $|C_1|+|C_3|=(t+1)(st+1)$.
 \end{enumerate}
\end{lemma}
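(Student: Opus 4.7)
The plan is to first use bipartiteness of $\varGamma$ to force $C$ into a single bipartite class, and then to read off the three equalities from the orders of $\Q$ by a straightforward incidence count.

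I would begin by observing that $\varGamma$ is bipartite with parts $\P$ and $\L$, so the graph distance between any two vertices has the same parity as the distance between their bipartite classes. Since $\delta=4$ is even, all pairs of codewords are at even graph-distance, so $C$ is contained entirely in one of $\P$ or $\L$. Replacing $\Q$ by its dual $\Q^D$ of order $(t,s)$ if necessary, I may assume without loss of generality that $C\subseteq\P$; then since any two distinct codewords are at distance at least $4$, they cannot be incident with a common line, and $C$ is a partial ovoid (this is also the content of Lemma~\ref{ovoidorspread}, which will be proved in any case).

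For part (1), each codeword $p\in C$ is incident with exactly $t+1$ lines, and the partial ovoid property ensures that no line contains two codewords. Hence the multiset of lines incident with codewords is in fact a set of size $(t+1)|C|$, each member of which is at distance $1$ from the unique codeword it contains and at distance at least $3$ from every other codeword, and therefore lies in $C_1$. Conversely, any element of $C_1$ is at distance $1$ from some codeword and so is counted, giving $|C_1|=(t+1)|C|$.

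For parts (2) and (3), bipartiteness together with the fact that $\varGamma$ has diameter $4$ shows that every point of $\Q$ is at even distance $0$, $2$, or $4$ from every codeword, while every line is at odd distance $1$ or $3$. Consequently $\P=C\sqcup C_2\sqcup C_4$ and $\L=C_1\sqcup C_3$ as disjoint unions, and substituting $|\P|=(s+1)(st+1)$ and $|\L|=(t+1)(st+1)$ delivers the two remaining identities. I do not anticipate any genuine obstacle: once the WLOG reduction to the partial ovoid case is in hand the lemma reduces to elementary bookkeeping, and the neighbour-transitivity hypothesis is not actually needed beyond what is already forced by the parity of $\delta$.
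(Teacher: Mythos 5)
Your proof is correct and follows essentially the same counting argument as the paper's: lines through codewords for part 1, and the parity-forced partitions of the point set and line set for parts 2 and 3 (the paper also never uses neighbour-transitivity here). The only cosmetic caveat is that ``replacing $\Q$ by its dual'' swaps $s$ and $t$ and hence would alter the stated formulas, so one should instead simply read the lemma as implicitly assuming $C\subseteq\P$; otherwise the two arguments are identical.
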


\begin{proof}
 Since there are $t+1$ lines through each point, and $\delta\geq 4$ implies that each line is incident with at most one codeword, part 1 holds. The set of all points of $\Q$ has size $(s+1)(st+1)$ and is the disjoint union $C\cup C_2 \cup C_4$, giving part 2. The set of all  lines  of $\Q$ has size $(t+1)(st+1)$ and is the disjoint union $C_1\cup C_3$, which gives part 3. 
\end{proof}

The following lemma is the motivation for results of Section~\ref{ovoidssect}. 

\begin{lemma}\label{ovoidorspread}
 Let $C$ be a code in a generalised quadrangle $\Q$ with minimum distance $\delta= 4$ and covering radius $\rho$. Then the following hold:
 \begin{enumerate}
  \item $C$ is a partial ovoid or a partial spread of $\Q$.
  \item $C$ is a maximal partial ovoid or a maximal partial spread of $\Q$ if and only if $\rho\leq 3$.
  \item $C$ is an ovoid or spread of $\Q$ if and only if $\rho=2$.  
 \end{enumerate}
\end{lemma}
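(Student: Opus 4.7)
The plan is to exploit the bipartite structure of $\varGamma$ together with the three generalised quadrangle axioms to pin down the distance from every vertex to $C$, and then read off all three claims from that classification.

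For part 1, I would start from the observation that $\varGamma$ is bipartite with parts $\P$ and $\L$, so that the distance between two points (or two lines) is always even, while the distance between a point and a line is always odd. Since $\delta=4$ is even and the diameter of $\varGamma$ is $4$, any two distinct codewords must lie in the same part of the bipartition: a mixed pair would sit at odd distance at most $3$, contradicting $\delta=4$. Once $C\subseteq \P$ has been established, two distinct codewords at distance $4$ cannot share a line (else they would be at distance $2$), which is exactly the partial ovoid condition. The case $C\subseteq \L$ is dual and yields a partial spread.

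For parts 2 and 3, I would assume without loss of generality that $C$ is a partial ovoid (the partial spread case being formally dual) and catalogue the non-zero distance classes. A line $\ell\notin C_1$ is incident with no codeword, so every path from $\ell$ to a codeword $p\in C$ has odd length at least $3$; axiom 3 of the generalised quadrangle guarantees a unique length-$3$ path, so $\ell$ lies at distance exactly $3$ from every codeword and thus $\ell\in C_3$. A non-codeword point either lies on a line through some codeword (distance $2$, so in $C_2$) or on no such line (distance at least $4$, and hence exactly $4$ by the diameter bound, so in $C_4$).

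With this classification in hand, the remaining parts become bookkeeping. The condition $C_4=\emptyset$ says that every non-codeword point is collinear with some codeword, which is exactly the statement that no further point can be added without creating a pair at distance $2$; hence $C_4=\emptyset$ is equivalent to maximality of the partial ovoid, and this translates directly to $\rho\leq 3$. Similarly, $C_3=\emptyset$ says that every line meets $C$ (in exactly one point, using $\delta=4$), which is the ovoid condition; since an ovoid is automatically maximal and $C_1,C_2$ are nonempty for any nontrivial $C$, this is equivalent to $\rho=2$. I do not anticipate any real obstacle here: the only non-routine input is the uniqueness clause of axiom 3, which is what forces lines missing $C$ to sit at distance exactly $3$ rather than merely at least $3$.
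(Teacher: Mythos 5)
Your proposal is correct and follows essentially the same route as the paper: bipartiteness plus the evenness of $\delta=4$ forces $C$ into one side of the bipartition (hence a partial ovoid or partial spread), and the equivalences $C_4=\emptyset\Leftrightarrow$ maximality and $C_3=\emptyset\Leftrightarrow$ ovoid/spread give parts 2 and 3. Your extra step classifying the distance classes via the uniqueness clause of axiom 3 is just a more explicit version of what the paper's proof does implicitly.
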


\begin{proof}
 By duality we may assume that there exists a point $p$ in $C$ and prove the result in terms of (maximal and/or partial) ovoids. Since $\delta$ is even and $\varGamma$ is bipartite it follows that $C$ is a subset of the set of points of $\Q$. Since $\delta= 4$, it follows that no line of $\Q$ is incident with two distinct points contained in $C$ and hence $C$ is a partial ovoid. Now, $\rho\leq 3$ if and only if $C_4$ is empty if and only if every point of $\Q$ is either in $C$ or lies on line contained in $C_1$ if and only if for every point $p'$ of $\Q$, with $p'\notin C$, the code $C\cup\{p'\}$ contains a pair of collinear points, that is, $C$ is a maximal partial ovoid. This proves part 2. Finally, $\rho=2$ if and only if $C_3$ is empty if and only if every line of $Q$ is incident with some element of $C$, proving part 3.
\end{proof}

The next result concerns neighbour-transitive codes in generalised quadrangles having an automorphism group that fixes a point. 

\begin{lemma}\label{pointstab}
 Suppose $C$ is a non-trivial neighbour-transitive code with minimum distance $\delta=3$ or $4$ in a generalised quadrangle $\Q$ and that $\Aut(C)$ fixes a point $p$ of $\Q$. Then $C\subseteq \varGamma_4(p)$.
\end{lemma}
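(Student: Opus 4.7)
The plan is to combine the two hypotheses: by neighbour-transitivity $\Aut(C)$ acts transitively on $C$, and by assumption every element of $\Aut(C)$ fixes $p$. Hence all codewords lie at a common distance $k \in \{0, 1, 2, 3, 4\}$ from $p$. Moreover, any automorphism of $\varGamma$ fixing the point $p$ must preserve the point/line bipartition (since an automorphism swapping the two parts globally cannot fix any vertex), so all codewords share the type of $p$ precisely when $k$ is even. Either way, pairwise distances between codewords are even, and combined with $\delta \geq 3$ and the hypothesis that $C$ is non-trivial (so $|C| \geq 2$), this forces $\delta = 4$. It remains to eliminate $k \in \{0, 1, 2, 3\}$.

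The cases $k = 0$ and $k = 1$ are immediate: $k = 0$ gives the trivial code $C = \{p\}$, and $k = 1$ means codewords are lines through $p$, any two of which meet at $p$ and hence lie at distance $2 < 4$.

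For $k = 2$ and $k = 3$, I would exhibit two non-empty $\Aut(C)$-invariant subsets of $C_1$, which contradicts transitivity of $\Aut(C)$ on $C_1$. Since $\delta = 4$, each vertex of $C_1$ is incident with exactly one codeword, so $C_1$ consists of the $(t+1)|C|$ lines through codewords (when $k = 2$) or the $(s+1)|C|$ points on codewords (when $k = 3$). When $k = 2$, for each codeword $c$ the unique line $\ell_c$ joining $p$ and $c$ lies in $C_1$ and passes through $p$, while the remaining $t \geq 1$ lines through $c$ lie in $C_1$ but miss $p$; the sets of lines through $p$ and of lines not through $p$ are $\Aut(C)$-invariant, giving the needed partition of $C_1$. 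When $k = 3$, the third generalised quadrangle axiom applied to $p$ and $\ell \in C$ singles out a unique point $q_\ell$ on $\ell$ collinear with $p$, while the remaining $s \geq 1$ points of $\ell$ lie at distance $4$ from $p$; the partition of $C_1$ by distance from $p$ is preserved by $\Aut(C)$, producing the same contradiction.

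The only point requiring care is confirming that the vertices claimed to lie in $C_1$ really do so, rather than in some $C_i$ with $i \geq 2$. This follows at once from $\delta = 4$: any vertex at graph distance $1$ from some codeword is forced to distance $\geq 3$ from every other codeword by the triangle inequality, so it automatically belongs to $C_1$. With this bookkeeping in place, the only surviving value is $k = 4$, which is the desired conclusion $C \subseteq \varGamma_4(p)$.
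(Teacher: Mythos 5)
Your proof is correct and follows essentially the same route as the paper's: transitivity of $\Aut(C)$ together with the fixed point $p$ forces all codewords to a common distance $k$ from $p$, and the intermediate values $k=2,3$ are excluded because the fixed point singles out a distinguished neighbour of each codeword, which is incompatible with transitivity on $C_1$. The paper phrases this last step via the stabiliser of a codeword (the dual of Lemma~\ref{lemma:Xptrans}) rather than via a non-trivial $\Aut(C)$-invariant partition of $C_1$, but the two formulations are equivalent.
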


\begin{proof}
 Since $p^G=\{p\}$ and we are assuming $|C|\geq 2$, we have that $p\notin C$. Also, since $\varGamma$ is bipartite and $\Aut(C)$ fixes $p$ but also must act transitively on $C$, we have that $C$ consists of either points or lines of $\Q$, but not both. Suppose that $C$ is a subset of the set of all points of $\Q$. Since $G$ fixes $p$ and preserves distances in $\varGamma$ it follows that either $C\subseteq\varGamma_2(p)$ or $C\subseteq\varGamma_4(p)$. Suppose there exists some $p_1\in C$ where $p_1\in\varGamma_2(p)$. By Lemma~\ref{lemma:Xptrans}, $G_{p_1}$ acts transitively on the set of lines through $p_1$. However, since $G$ fixes $p$, the stabiliser $G_{p_1}$ fixes the line through $p$ and $p_1$, giving a contradiction. Hence $C\subseteq \varGamma_4(p)$.
 
 Suppose $C$ is a subset of the set of all lines of $\Q$. Moreover, suppose $p\in C_1$. Then there exists some line $\ell\in C$ such that $p\in\varGamma_1(\ell)$. However, this implies that $G_\ell$ fixes $p$, which is in $\varGamma_1(\ell)$, contradicting Lemma~\ref{lemma:Xptrans}. Thus $p\in C_3$. By the defining property of a generalised quadrangle, there exists a unique point $p_1$ and a unique line $\ell_1$ such that $p$ lies on $\ell_1$ and $p_1$ lies on both $\ell$ and $\ell_1$. It follows that $G_\ell$ fixes the point $p_1$, again contradicting Lemma~\ref{lemma:Xptrans}. Thus the result holds.
\end{proof}

%
%

\section{Ovoids and spreads}\label{ovoidssect}

In this section we classify all the neighbour-transitive ovoids and spreads in thick classical generalised quadrangles.

\begin{lemma}\label{lemma:ovoiddivis}
 Let $C$ be a $G$-neighbour-transitive ovoid in a generalised quadrangle $\Q$ of order $(s,t)$, and $p\in C$. Then $G_p$ has order divisible by $t+1$ and $G$ has order divisible by $(t+1)(st+1)$.
\end{lemma}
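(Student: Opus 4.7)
The plan is to derive both divisibility claims by two applications of the orbit-stabiliser theorem: one using the transitive action of $G$ on $C$, and one using the action of the point-stabiliser $G_p$ on the lines through $p$, the latter being supplied by Lemma~\ref{lemma:Xptrans}.

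First I would verify that $C$, regarded as a code in the point-line incidence graph $\varGamma$, has minimum distance $\delta = 4$. Since $C$ consists entirely of points and $\varGamma$ is bipartite, any two distinct codewords are at even distance, and the defining property of an ovoid forbids them from sharing a line and so from being at distance $2$. Together with the fact that the diameter of $\varGamma$ is $4$, this forces $\delta = 4$, placing us in the range of minimum distance to which Lemma~\ref{lemma:Xptrans} applies.

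Next I would count the ovoid by the standard incidence argument: each of the $(t+1)(st+1)$ lines of $\Q$ meets $C$ in exactly one point and each point of $C$ lies on $t+1$ lines, yielding $|C| = st+1$. Transitivity of $G$ on $C$ and orbit-stabiliser then give $|G| = (st+1)\,|G_p|$. In parallel, Lemma~\ref{lemma:Xptrans} asserts that $G_p$ acts transitively on the set of $t+1$ lines incident with $p$, so a second application of orbit-stabiliser produces $(t+1) \mid |G_p|$. Substituting this into the formula for $|G|$ yields $(t+1)(st+1) \mid |G|$, completing the proof.

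There is no real obstacle here: the work is essentially bookkeeping once one observes that an ovoid is automatically a code of minimum distance $4$ in $\varGamma$, which unlocks Lemma~\ref{lemma:Xptrans} and makes the two orbit-stabiliser counts immediate.
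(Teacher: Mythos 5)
Your proposal is correct and follows essentially the same route as the paper's proof: establish $|C|=st+1$ (which the paper simply cites from Payne--Thas 1.8.1), use transitivity of $G$ on $C$ for the factor $st+1$, and use Lemma~\ref{lemma:Xptrans} to get $(t+1)\mid |G_p|$. Your added checks --- that an ovoid automatically has minimum distance $4$, and the explicit orbit-stabiliser step $|G|=(st+1)|G_p|$ combining the two divisibilities --- are correct and merely make explicit what the paper leaves implicit.
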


\begin{proof}
 By \cite[1.8.1]{paynefinite}, $C$ has size $(st+1)$. Since $G$ acts transitively on $C$, it follows that $(st+1)$ divides $|G|$. If $p\in C$ then, by Lemma~\ref{lemma:Xptrans}, $G_p$ acts transitively on the set of lines incident with $p$. There are $t+1$ lines through $p$, so that $t+1$ divides $|G_p|$. Hence the result holds.
\end{proof}

The following three lemmas provide examples of neighbour-transitive codes in generalised quadrangles.

\begin{lemma}\label{regspreadisNT}
 A regular spread of $\W_3(q)$ is neighbour-transitive.
\end{lemma}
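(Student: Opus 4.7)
The plan is to apply Lemma~\ref{lemma:Xptrans} with the roles of points and lines interchanged (the proof of that lemma uses only the bipartite structure of $\varGamma$ and goes through in either direction). The codewords of a spread are pairwise at distance $4$ in $\varGamma$, so $\delta=4$, and it therefore suffices to exhibit a subgroup $G\le\Aut(\W_3(q))$ preserving $C$ such that (a) $G$ acts transitively on $C$ and (b) for some $\ell\in C$, the stabiliser $G_\ell$ acts transitively on the $q+1$ points of $\ell$.

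I would realise $C$ via field reduction. Identify the $4$-dimensional $\F_q$-space $V$ underlying $\pg_3(q)$ with a $2$-dimensional $\F_{q^2}$-space $W$ in such a way that the symplectic form $f$ defining $\W_3(q)$ arises from the trace over $\F_{q^2}/\F_q$ of a suitable $\F_{q^2}$-bilinear form on $W$; with this choice each $1$-dimensional $\F_{q^2}$-subspace of $W$ is totally isotropic for $f$ when viewed as a $2$-dimensional $\F_q$-subspace of $V$, and the regular spread $C$ is precisely the set of these $\F_{q^2}$-lines.

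Next I would produce $G$. The $\F_{q^2}$-linear action of $\GL_2(q^2)$ on $W$ is automatically $\F_q$-linear on $V$ and preserves $C$; the subgroup that preserves $f$ up to a scalar, after projectivising modulo the $\F_q^*$-scalars, gives a subgroup $G\le\PGSp_4(q)\le\Aut(\W_3(q))$ stabilising $C$. This $G$ contains the image of $\SL_2(q^2)$, whose induced action on $C=\pg_1(q^2)$ is the usual $2$-transitive action of $\PSL_2(q^2)$; hence (a) holds. For (b), fix $\ell=\langle v\rangle_{\F_{q^2}}\in C$. The points on $\ell$ are the $1$-dimensional $\F_q$-subspaces of $\F_{q^2}v$, naturally parametrised by the cyclic group $\F_{q^2}^*/\F_q^*$ of order $q+1$. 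The image in $G$ of the $\F_{q^2}$-scalar subgroup of $\GL_2(q^2)$ lies in $G_\ell$ and sends $\F_q v\mapsto \F_q(\mu v)$ for $\mu\in\F_{q^2}^*$, realising the regular action on $\F_{q^2}^*/\F_q^*$, so $G_\ell$ is transitive on the points of $\ell$.

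Combining (a) and (b) via Lemma~\ref{lemma:Xptrans} (with points and lines swapped) yields neighbour-transitivity. The main obstacle is the one-off compatibility check between $f$ and the $\F_{q^2}$-structure on $W$: one needs to verify that the form can be chosen so that both $\SL_2(q^2)$ and the $\F_{q^2}$-scalar subgroup land, after projectivising, inside $\Aut(\W_3(q))$. Once this setup is in place, both transitivity statements are immediate from the standard action of $\PGL_2(q^2)$ and its scalar subgroup on $\pg_1(q^2)$.
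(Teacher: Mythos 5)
Your overall route is sound and, unlike the paper's proof (which simply cites \cite[Sections~3.2 and 3.4]{bamberg2009classification} for the $2$-transitivity of $\PGaL_2(q^2)$ on the spread and the transitivity of $\Aut(C)_\ell\cong q^2:\GaL_1(q^2)$ on the points of $\ell$), it is essentially self-contained. However, the ``one-off compatibility check'' you defer is exactly where the argument breaks: the $\F_{q^2}$-scalar subgroup of $\GL_2(q^2)$ does \emph{not} land inside $\Aut(\W_3(q))$. Writing $f=T\circ g$ with $g$ the ($\F_{q^2}$-bilinear, alternating) determinant form on $W$ and $T:\F_{q^2}\to\F_q$ a nonzero $\F_q$-linear functional, scalar multiplication by $\mu$ sends $g$ to $\mu^2g$, so it carries $f$ to $(u,v)\mapsto T(\mu^2 g(u,v))$. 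Since every such $T$ has the form $T(x)=\mathrm{Tr}_{\F_{q^2}/\F_q}(\alpha x)$, requiring $T(\mu^2x)=\lambda T(x)$ for all $x$ forces $\mu^2=\mu^{2q}=\lambda$, i.e.\ $\mu^2\in\F_q$. Hence, for \emph{any} admissible choice of $T$, only those $\mu$ with $\mu^2\in\F_q^*$ give similitudes of $f$; modulo $\F_q^*$ this is a group of order at most $2$, nowhere near transitive on the $q+1$ points of $\ell$. (Scalars with $\mu^2\notin\F_q$ do preserve the spread, but they move the remaining totally isotropic lines of $\W_3(q)$ to lines of a different symplectic quadrangle, so they are not automorphisms of $\varGamma$.)

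The repair is easy and stays entirely within your framework: you do not need the scalar subgroup at all. Take $\ell=\langle e_1\rangle_{\F_{q^2}}$ and the torus $\{\mathrm{diag}(a,a^{-1}) : a\in\F_{q^2}^*\}\leq\SL_2(q^2)$, which genuinely lies in $\Sp_4(q)$ (it preserves $g$, hence $f$), stabilises $\ell$, and maps $\F_q e_1$ to $\F_q(ae_1)$; as $a$ runs over $\F_{q^2}^*$ this realises the full transitive action on the $q+1$ points of $\ell$, parametrised by $\F_{q^2}^*/\F_q^*$. With this substitution both (a) and (b) hold and your argument goes through, giving an explicit alternative to the paper's citation-based proof.
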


\begin{proof}
 Let $C$ be a regular spread of $\W_3(q)$. Then $\Aut(C)\cong \PGaL_2(q^2)$ acts $2$-transitively on the codewords of $C$ (see \cite[Sections~3.2 and 3.4]{bamberg2009classification}). Moreover,  $\Aut(C)_\ell=q^2:\GaL_1(q^2)$ acts transitively on the set of points incident with $\ell$. Thus $C$ is neighbour-transitive.
\end{proof}

\begin{lemma}\label{regspreadminusline}
 A regular spread of $\W_3(q)$ with a line removed is a neighbour-transitive code of size $q^2$.
\end{lemma}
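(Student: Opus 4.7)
The plan is to derive the claim from Lemma~\ref{regspreadisNT} and the dual of Lemma~\ref{lemma:Xptrans}. Let $\S$ be a regular spread of $\W_3(q)$ and let $C=\S\setminus\{\ell_0\}$ for some $\ell_0\in\S$. Since $\W_3(q)$ has order $(q,q)$, any spread has $st+1=q^2+1$ lines, and hence $|C|=q^2$. Moreover, any two distinct lines of $\S$ are disjoint, so they lie at distance $4$ in the incidence graph $\varGamma$; thus $\delta(C)=4$, and the dual of Lemma~\ref{lemma:Xptrans} is applicable.

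Next, I would observe that $\Aut(C)\supseteq\Aut(\S)_{\ell_0}$, since any automorphism of $\varGamma$ stabilising $\S$ and fixing $\ell_0$ also stabilises $C$. By Lemma~\ref{regspreadisNT}, $\Aut(\S)\cong\PGaL_2(q^2)$ in its natural $2$-transitive action on $\S$, and so $\Aut(\S)_{\ell_0}$ is transitive on $\S\setminus\{\ell_0\}=C$. This establishes one of the two conditions required by the dual of Lemma~\ref{lemma:Xptrans}.

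The main step, and what I expect to be the central obstacle, is to show that, for some $\ell\in C$, the two-line stabiliser $H:=\Aut(\S)_{\ell_0,\ell}\leq\Aut(C)_\ell$ acts transitively on the $q+1$ points incident with $\ell$. To handle this, I would use the field-reduction description of the regular spread: write the underlying space of $\pg_3(q)$ as $V=\F_{q^2}^2$ viewed as a $4$-dimensional $\F_q$-space, and take $\ell_0$ and $\ell$ to correspond to the first and second coordinate $\F_{q^2}$-lines. Then $H$ contains the image in $\PGL_2(q^2)$ of the diagonal torus $\{\Diag(a,b):a,b\in\F_{q^2}^*\}$, which induces on $\ell\cong\F_{q^2}$ the action of $\F_{q^2}^*$ by multiplication. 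Identifying the $q+1$ projective points of $\ell$ with the $\F_q$-lines in the $\F_q$-plane $\F_{q^2}$, this action factors through the cyclic group $\F_{q^2}^*/\F_q^*$ of order $q+1$, which acts regularly. Hence $H$ is transitive on the points of $\ell$, and $C$ is neighbour-transitive by the dual of Lemma~\ref{lemma:Xptrans}.
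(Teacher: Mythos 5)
Your proposal is correct and follows essentially the same route as the paper: transitivity on $C$ comes from the $2$-transitive action of $\PGaL_2(q^2)$ on the regular spread, and transitivity on the points of a line $\ell\in C$ comes from the two-line stabiliser, which the paper identifies as $\GaL_1(q^2)$ and you realise concretely as the diagonal torus acting through $\F_{q^2}^*/\F_q^*$. Your explicit field-reduction computation simply fills in a step the paper asserts without detail.
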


\begin{proof}
 Let $C$ be a regular spread of $\W_3(q)$ with one line removed. Then $C$ has size $q^2$ and, since $\PGaL_2(q^2)$ acts $2$-transitively on the lines of a regular spread (see \cite[Sections~3.2 and 3.4]{bamberg2009classification}), we have that $\Aut(C)\cong q^2:\GaL_1(q^2)$ acts transitively on $C$. It remains to prove that for a line $\ell\in C$, the stabiliser $\Aut(C)_\ell$ acts transitively on the set of points on $\ell$. Indeed $\Aut(C)_\ell\cong \GaL_1(q^2)$ acts transitively on the set of points incident with $\ell$. Thus $C$ is neighbour-transitive.
\end{proof}

\begin{lemma}\label{classicalunital}
 A classical ovoid of $\H_3(q^2)$ is neighbour-transitive. 
\end{lemma}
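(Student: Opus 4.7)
The plan is to realise the classical ovoid $C$ as the intersection of $\H_3(q^2)$ with a non-tangent hyperplane $\Pi$ of $\pg_3(q^2)$. Since $|C|=q^3+1$ exceeds the number of points of any $2$-dimensional $\F_{q^2}$-subspace, $C$ spans $\Pi$, so $\Pi$ is recovered from $C$ and $\Aut(C)$ coincides with the stabiliser of $\Pi$ in $\PGaU_4(q)$. By Lemma~\ref{lemma:Xptrans} it then suffices to verify (i) that $\Aut(C)$ acts transitively on $C$, and (ii) that for a fixed $p\in C$ the stabiliser $\Aut(C)_p$ acts transitively on the $q+1$ lines of $\H_3(q^2)$ through $p$.

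For (i), write $\Pi=u^\perp$ with $u$ anisotropic and observe that the restriction map from $\Aut(C)$ to $\Pi$ surjects onto the semi-similarity group $\PGaU_3(q)$ of the induced non-degenerate Hermitian form on $\Pi$, which is $2$-transitive on the $q^3+1$ points of the Hermitian curve $C$. Part (ii) is more delicate. Fixing $p=\langle v\rangle\in C$, the $q+1$ lines of $\H_3(q^2)$ through $p$ are in natural bijection with the $q+1$ isotropic $1$-spaces of the $2$-dimensional quotient $v^\perp/\langle v\rangle$ equipped with its induced non-degenerate Hermitian form. I would then introduce the subgroup $K\leq \Aut(C)$ consisting of those elements which act as the identity on $\Pi$ and by an arbitrary norm-one scalar $\alpha\in\mu_{q+1}:=\{\alpha\in\F_{q^2}^*:\alpha^{q+1}=1\}$ on $\Pi^\perp=\langle u\rangle$. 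A quick check that the form is preserved shows $K$ is cyclic of order $q+1$ in $\PGaU_4(q)$, and since $K$ fixes $C$ pointwise we have $K\leq \Aut(C)_p$.

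To confirm that $K$ acts transitively on the $q+1$ lines through $p$, choose $w\in v^\perp\cap\Pi$ with $w\notin\langle v\rangle$, so that $u+\langle v\rangle$ and $w+\langle v\rangle$ form a basis of $v^\perp/\langle v\rangle$; both basis vectors are anisotropic and mutually orthogonal under the induced form, so the $q+1$ isotropic $1$-spaces are parametrised by ratios $a/b$ satisfying $(a/b)^{q+1}=-f(w,w)/f(u,u)$, a single coset of $\mu_{q+1}$ in $\F_{q^2}^*$. The element of $K$ with parameter $\alpha$ sends $a/b$ to $\alpha(a/b)$, and left multiplication by $\mu_{q+1}$ on any of its own cosets is regular, hence transitive. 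Combining this with (i) and Lemma~\ref{lemma:Xptrans} yields neighbour-transitivity. The chief obstacle is (ii): the $2$-transitivity of $\PGaU_3(q)$ only describes the internal structure of $C$ inside $\Pi$ and sees nothing of the $q+1$ lines of $\H_3(q^2)$ emanating from $p$ into the ambient space, so the additional symmetry has to be produced from outside $\Pi$, and identifying the cyclic scaling of $\Pi^\perp$ together with computing its effect on $v^\perp/\langle v\rangle$ is the technical heart of the argument.
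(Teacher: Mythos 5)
Your proof is correct and follows essentially the same route as the paper: both realise $C$ as the set of singular points in the perp of a non-singular point, obtain transitivity on $C$ from the $2$-transitive action of the unitary group on the Hermitian curve, and then exhibit a cyclic group of order $q+1$ fixing $C$ pointwise (hence fixing $p$) that permutes the $q+1$ lines through $p$ regularly. The paper performs this last step in explicit coordinates, via a diagonal element scaling the $\langle e_3,e_4\rangle$-part by norm-one scalars, which is exactly your group $K$ written concretely.
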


\begin{proof}
 Let the points and lines of $\H_3(q^2)$ be the totally isotropic rank $1$ and $2$, respectively, subspaces of the vector space $V=\langle e_1,e_1,e_3,e_4\rangle$ over $\F_{q^2}$ under the quadratic form $f(x)=x_1^{q+1}+x_2^{q+1}+x_3^{q+1}+x_4^{q+1}$. The subgroup of $\PSU_4(q)$ stabilising a non-singular point $P$ is $\PSU_3(q)$ and the set of singular points contained in $P^\perp$ make up a classical ovoid $C$ of $\H_3(q^2)$, on which $\PSU_3(q)$ acts $2$-transitively \cite[Section~3.1]{bamberg2009classification}. Thus, in order to prove that $C$ is neighbour-transitive all that is left to prove is that for some $p\in C$ we have that $\Aut(C)_p$ acts transitively on the lines through $p$.
 
 Let $P=\langle e_4\rangle$, so that $P^\perp=\langle e_1,e_2,e_3\rangle$, and let $p=\langle e_1+be_2\rangle$, where $b\in\F_q^2$ is a fixed element chosen so that $b^{q+1}=-1$ (note that such an element $b$ always exists since we are working in the quadratic extension $\F_{q^2}$ of $\F_q$). Each line through $p$ then has the form $\langle p, ae_3+abe_4\rangle$ where $a\in\F_{q^2}$ such that $a^{q+1}=1$; note that there are precisely $q+1$ choices for $a$ here. Now, the map $g_\lambda:(e_1,e_2,e_3,e_4)\mapsto (e_1,e_2,\lambda e_3,\lambda e_4)$, where $\lambda\in\F_{q^2}$, is an element of $\Aut(C)_p$ precisely when $\lambda^{q+1}=1$. Moreover, $g_{a^{-1}}$ maps $\langle p, ae_3+abe_4\rangle$ to $\langle p, e_3+be_4\rangle$, and hence $\Aut(C)_p$ acts transitively on the set of lines through $p$. Thus $C$ is neighbour-transitive.
\end{proof}

We can now prove the main result of this section. 

\begin{theorem}\label{NTovoidsspreads}
 Let $C$ be a neighbour-transitive code with minimum distance $4$ and covering radius $\rho=2$ in a thick classical generalised quadrangle $\Q$ of order $(s,t)$ and assume that $\Aut(C)$ is insoluble. Then $C$ is equivalent to one of the following:
 \begin{enumerate}
  \item The regular spread of $\W_3(q)$.
  \item A classical ovoid of $\H_3(q^2)$.
 \end{enumerate}
\end{theorem}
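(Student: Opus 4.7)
The plan is to reduce via duality to a classification of neighbour-transitive ovoids in thick classical generalised quadrangles. By Lemma~\ref{ovoidorspread}(3), a code $C$ with $\delta=4$ and $\rho=2$ is either an ovoid or a spread of $\Q$; since a spread of $\Q$ is precisely an ovoid of the dual quadrangle $\Q^D$, Table~\ref{table:classGQs} lets us concentrate on classifying neighbour-transitive ovoids in each thick classical quadrangle admitting an insoluble group of automorphisms. It is known that $\Qu_5^-(q)$ and $\H_4(q^2)$ admit no ovoids, so only $\W_3(q)$ (where ovoids exist only for $q$ even), $\Qu_4(q)$, and $\H_3(q^2)$ need be considered.

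The next step is to invoke the classification of transitive ovoids in classical generalised quadrangles, for which \cite{bamberg2009classification} is the key reference. By Lemma~\ref{lemma:ovoiddivis}, if $G=\Aut(C)$ is transitive on $C$ then $|G|$ is divisible by $(t+1)(st+1)$, and, combined with the insolubility hypothesis and the structures of $\Aut(\Q)$ recorded in Table~\ref{table:classGQs}, this confines $G$ to a short list of candidates inside the corresponding classical group. For each such candidate, neighbour-transitivity is then tested via Lemma~\ref{lemma:Xptrans}: the stabiliser $G_p$ of a codeword $p\in C$ must act transitively on the $t+1$ lines incident with $p$. The two positive outcomes---the regular spread of $\W_3(q)$ (equivalently, under duality, the elliptic quadric ovoid of $\Qu_4(q)$) and the classical Hermitian ovoid of $\H_3(q^2)$---have already been shown to be neighbour-transitive in Lemmas~\ref{regspreadisNT} and~\ref{classicalunital}.

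The main obstacle will be to rule out the remaining transitive ovoids, most notably the Tits ovoids of $\W_3(q)$ for $q=2^{2h+1}$, the Kantor--Knuth and other non-elliptic ovoids of $\Qu_4(q)$ for odd $q$, and any potential non-classical transitive ovoids of $\H_3(q^2)$. For each such family the strategy is to examine the orbit structure of the codeword stabiliser $G_p$ on the $t+1$ lines through $p$ and verify that more than one orbit occurs, so that the condition of Lemma~\ref{lemma:Xptrans} fails. Because the relevant point stabilisers in these exceptional families are typically of the form $[q^{1+2}]\colon H$ for a well-understood complement $H$ (for instance $H=\Sz(q)$ in the Tits case), this reduces to a finite list of orbit calculations; these, together with the divisibility constraints from Lemma~\ref{lemma:ovoiddivis} that already eliminate many candidates at the outset, complete the classification.
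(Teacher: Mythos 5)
Your proposal follows essentially the same route as the paper: reduce to ovoids/spreads via Lemma~\ref{ovoidorspread}, invoke the Bamberg--Penttila classification of transitive ovoids and spreads of polar spaces, confirm the two surviving cases via Lemmas~\ref{regspreadisNT} and~\ref{classicalunital}, and eliminate the rest (which the classification reduces to just the exceptional ovoid of $\H_3(5^2)$ and the Suzuki--Tits ovoid --- the Kantor--Knuth ovoids you worry about are not transitive, so never enter). The only difference is that the paper dispatches both remaining cases purely by the divisibility condition of Lemma~\ref{lemma:ovoiddivis}, with no need for the orbit computations you propose as the primary tool.
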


\begin{proof}
 Recall that a generalised quadrangle is polar space of rank $2$. Since $G$  acts transitively on $C$, we can apply \cite[Theorem~1.1]{bamberg2009classification}. As we are only interested in those polar spaces appearing in Table~\ref{table:classGQs}, we then have that \cite[Table~1]{bamberg2009classification} implies that $C$ is equivalent to either (i) a regular spread of $\W_3(q)$, (ii) a classical ovoid of $\H_3(q^2)$, (iii) the ``exceptional ovoid'' of $\H_3(5^2)$ or (iv) the Suzuki-Tits ovoid of $\Qu_4(2^{2h+1})$. Note that many of the examples listed in \cite[Table~1]{bamberg2009classification} are the duals of another entry and thus are equivalent as codes in the point-line incidence graphs. 
 
 The codes (i) and (ii) are neighbour-transitive by Lemmas~\ref{regspreadisNT} and~\ref{classicalunital}. The full automorphism group of the exceptional ovoid of $\H_3(5^2)$ is given in \cite[Section~3.1]{bamberg2009classification} and has order $2^5\cdot 3^2\cdot 7$; this is not divisible by $(t+1)(st+1)=2^2\cdot 3^3\cdot 7$, and hence, by Lemma~\ref{lemma:ovoiddivis}, this ovoid is not neighbour-transitive. In case (iv), $\Aut(C)$ is contained in the automorphism group of the Suzuki group $\Sz(q)$. Hence $|G|$ divides $d(p^{2d}+1)p^{2d}(p^d-1)$, where $q=p^d$, and is thus not divisible by $(p^d+1)(p^{2d}+1)$, contradicting Lemma~\ref{lemma:ovoiddivis}. This completes the proof.
\end{proof}

\section{\texorpdfstring{Codes in ${\mathsf W}_3(q)$}{W(3,q)}}\label{w3qsect}

In this section we consider codes in the classical generalised quadrangle $\W_3(q)$ with points an lines given, respectively, by isotropic $1$-dimensional and $2$-dimensional subspaces under a symplectic bilinear form. First we consider neighbour-transitive codes of size $2$. Note that, since $\W_3(q)$ is self-dual if and only if $q$ is even, when $q$ is even a pair of lines may be mapped to a pair of points under the duality automorphism, but not when $q$ is odd.
\begin{proposition}
 Let $C$ be a code of size $2$ with minimum distance $\delta=4$ in ${\mathsf W}_3(q)$. Then $C$ is equivalent to either a pair of non-collinear points or a pair of non-intersecting lines and $C$ is neighbour-transitive.
\end{proposition}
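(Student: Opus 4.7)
The plan has three stages: use bipartite parity to identify the two possible types of $C$, invoke Witt's extension theorem to show each type forms a single $\Aut(\W_3(q))$-orbit, and verify the hypotheses of Lemma~\ref{lemma:Xptrans} to conclude neighbour-transitivity.

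Since $\varGamma$ is bipartite and $\delta=4$ is even, the two codewords of $C$ lie in the same part of the bipartition, so $C$ is either a pair of points or a pair of lines of $\W_3(q)$. Two distinct points of $\W_3(q)$ are at graph distance~$2$ precisely when they share a line (i.e.\ are collinear), and dually two distinct lines are at distance~$2$ precisely when they meet; so $\delta=4$ characterises $C$ as a pair of non-collinear points or a pair of non-intersecting lines. For the orbit claim, if $\{p,q\}$ are non-collinear then spanning vectors $u,v$ satisfy $f(u,v)\neq 0$, so $\langle u,v\rangle$ is a hyperbolic plane and Witt's extension theorem gives transitivity of $\Sp_4(q)$ on such unordered pairs. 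Similarly, two non-intersecting totally isotropic $2$-subspaces $L_1,L_2$ yield a decomposition $V=L_1\oplus L_2$ into complementary maximal totally isotropic subspaces, and any two such decompositions are $\Sp_4(q)$-equivalent.

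For neighbour-transitivity I would apply Lemma~\ref{lemma:Xptrans}. In the point case, choose a symplectic basis $e_1,f_1,e_2,f_2$ of $V$ with $p=\langle e_1\rangle$ and $q=\langle f_1\rangle$: the map $e_i\mapsto f_i,\ f_i\mapsto -e_i$ for $i=1,2$ lies in $\Sp_4(q)$ and swaps $p$ with $q$, while the subgroup $\Sp(\langle e_2,f_2\rangle)\cong\SL_2(q)$ fixes both $p$ and $q$ pointwise and acts transitively on the $q+1$ projective points of $\langle e_2,f_2\rangle$, which via $v\mapsto\langle p,v\rangle$ correspond bijectively to the $q+1$ lines of $\W_3(q)$ through $p$. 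In the line case, using the decomposition $V=L_1\oplus L_2$, an analogous symplectic involution swaps $L_1$ and $L_2$, and the Levi subgroup $\GL_2(q)\leq\Sp_4(q)$ stabilising the ordered pair $(L_1,L_2)$ acts as $\GL_2(q)$ on $L_1$ (and by the contragredient on $L_2$), hence transitively on the $q+1$ projective points of $L_1$, which are precisely the points of $\W_3(q)$ incident with $L_1$.

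No step is particularly deep; the main (mild) obstacle is the bookkeeping translating between symplectic linear algebra and graph-theoretic distance and neighbour sets. I note that when $q$ is odd there is no duality available between the set of points and the set of lines of $\W_3(q)$, so the two configurations cannot be merged and must be handled in parallel, as above.
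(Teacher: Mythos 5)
Your proof is correct and follows essentially the same route as the paper: identify $C$ as a pair of non-collinear points or non-intersecting lines via bipartite parity, use transitivity of $\Sp_4(q)$ on such pairs, and exhibit an $\SL_2(q)$ (resp.\ $\GL_2(q)$) inside the codeword stabiliser acting transitively on the $q+1$ neighbours, then apply Lemma~\ref{lemma:Xptrans}. You simply supply the standard justifications (Witt's theorem, explicit swapping involutions) that the paper's terser proof takes for granted.
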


\begin{proof}
 Since $\delta=4$, we have that $C$ is either a pair of non-collinear points or a pair of non-intersecting lines. The automorphism group $G$ of the point-line incidence graph $\varGamma$ of $\W_3(q)$ contains $\PSp_4(q)$ which acts transitively on the set of all pairs of non-collinear points as well as transitively on the set of all pairs of non-intersecting lines. Let $C$ be a pair of non-collinear points (respectively, non-intersecting lines). The stabiliser in $\Aut(C)$ of a point $p$ (respectively, line $\ell$) in $C$ contains a subgroup isomorphic to $\SL_2(q)$ that acts transitively on the set of all lines incident with $p$ (points incident with $\ell$). Thus $C$ is neighbour-transitive.
\end{proof}

In the following few results we construct the codes arising in Theorem~\ref{mainresult}.

\begin{proposition}\label{hyplineNT}
 Let $C$ be the set of points contained in a hyperbolic line of $\W_3(q)$. Then $C$ is a neighbour-transitive maximal partial ovoid.
\end{proposition}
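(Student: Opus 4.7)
The plan is to verify in turn that $C$ has minimum distance $4$, that its covering radius is at most $3$, and that the setwise stabiliser of the hyperbolic line acts with enough transitivity. Let $U$ be the $2$-dimensional subspace of the underlying symplectic space $V$ corresponding to the hyperbolic line, so that $C$ is the set of $q+1$ projective points of $U$. By definition the symplectic form restricts non-degenerately to $U$, so $U$ is not totally isotropic and hence is not a line of $\W_3(q)$; any two distinct points of $C$ span $U$, so no line of $\W_3(q)$ contains two points of $C$, and the minimum distance of $C$ is $4$.

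For maximality I would invoke Lemma~\ref{ovoidorspread} and show the covering radius is at most $3$, i.e., that every point $p'$ of $\W_3(q)$ outside $C$ is collinear in $\W_3(q)$ with some point of $C$. Since $\dim p'^\perp + \dim U = 3+2 > \dim V$, the intersection $p'^\perp \cap U$ is at least $1$-dimensional, so it contains some point $r \in U$ with $r \perp p'$; since alternating forms vanish on every vector, $\langle p', r \rangle$ is a totally isotropic $2$-space, hence a line of $\W_3(q)$ through $p'$ and the codeword $r$.

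For neighbour-transitivity, Lemma~\ref{lemma:Xptrans} reduces the task to showing that $\Aut(C)$ acts transitively on $C$ and that, for a chosen $p \in C$, the stabiliser $\Aut(C)_p$ acts transitively on the $q+1$ lines of $\W_3(q)$ through $p$. Using the orthogonal decomposition $V = U \perp U^\perp$, the setwise stabiliser of $U$ in $\Sp_4(q)$ contains the commuting product $\Sp(U) \times \Sp(U^\perp) \cong \SL_2(q) \times \SL_2(q)$; the first factor acts transitively on $C$, while the second fixes $U$ pointwise and hence lies in $\Aut(C)_p$. The non-degeneracy of the form on $U$ gives $p^\perp \cap U = \langle p \rangle$, so $p^\perp = \langle p \rangle \oplus U^\perp$ and $p^\perp/\langle p \rangle \cong U^\perp$ as an $\Sp(U^\perp)$-module. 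The $q+1$ lines of $\W_3(q)$ through $p$ correspond to projective points of $p^\perp/\langle p \rangle$, on which $\Sp(U^\perp) \cong \SL_2(q)$ acts transitively, completing the verification.

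The step most worth writing out carefully is the identification $p^\perp/\langle p \rangle \cong U^\perp$ together with the matching of lines of $\W_3(q)$ through $p$ to projective points of the quotient; once that is in place, transitivity on lines through $p$ is immediate and the remainder of the argument is essentially bookkeeping.
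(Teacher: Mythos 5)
Your proof is correct, and for the substantive part of the statement --- neighbour-transitivity --- it follows essentially the same route as the paper: the orthogonal decomposition $V=U\perp U^\perp$, the stabiliser $\SL_2(q)\times\SL_2(q)$ of $U$ in $\Sp_4(q)$, transitivity of the first factor on the $q+1$ points of $U$, and the $\Sp(U^\perp)$-equivariant bijection between the lines of $\W_3(q)$ through $p$ and the points of $U^\perp$ (which you phrase via the isomorphism $p^\perp/\langle p\rangle\cong U^\perp$; the paper phrases it by noting each point of $U$ is collinear with each point of $U^\perp$ --- these are the same bijection). The one place you genuinely diverge is maximality: the paper simply cites \cite[Theorem~2.1]{cimrakova2007smallest}, whereas you prove it directly via Lemma~\ref{ovoidorspread} and the dimension count $\dim(p'^\perp\cap U)\geq 3+2-4=1$, which produces a totally isotropic $2$-space $\langle p',r\rangle$ joining any external point $p'$ to a codeword $r$. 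Your self-contained argument is valid (the only points to be explicit about, which you essentially are, being that $r\neq 0$ and $p'\notin U$ so that $\langle p',r\rangle$ really is a line), and it buys independence from the reference at the cost of a few lines; your explicit verification that $\delta=4$, via non-degeneracy of the form on $U$ forcing $f(u_1,u_2)\neq 0$ for distinct points of $C$, is likewise correct and is left implicit in the paper.
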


\begin{proof}
 First, by \cite[Theorem~2.1]{cimrakova2007smallest}, $C$ is a maximal partial ovoid. Following \cite[Section~3.5.4]{wilson2009finite}, let $V\cong\F_q^4$ be the underlying vector-space of $\W_3(q)$ equipped with a symplectic bilinear form $f$. If $W$ is a non-singular rank $2$ subspace of $V$ then it follows that $V=W\oplus W^\perp$ and that the stabiliser inside $\Sp_4(q)$ of $W$ is $\SL_2(q)\times\SL_2(q)$. This induces a group that acts transitively on the set of all pairs $(p_1,p_2)$ where $p_1$ is a point contained in $W$ and $p_2$ is a point contained in $W^\perp$. Hence $\Aut(C)$ acts transitively on $C$. Moreover, each point $p_1$ in $W$ is collinear with each point $p_2$ in $W^\perp$, which implies that the set of $q+1$ lines through $p_1$ is in bijection with the set of $q+1$ points in $W^\perp$. Thus $\Aut(C)_{p_1}$ acts transitively on the set of lines through $p_1$ and the result holds.
\end{proof}


\begin{example}\label{smallqequals3}
 Let $V\cong\F_3^4$ be equipped with the symplectic form $f(x,y)=x_1y_2-x_2y_1+x_3y_4-x_4y_3$. Representing each line of $\W_3(3)$ as a $2\times 4$ matrix via its row-space, let
 \[
  C=\Big\{ 
  \begin{bmatrix}
   1 & 0 & 1 & 0\\
   1 & 1 & 2 & 2
  \end{bmatrix},
  \begin{bmatrix}
   1 & 0 & 0 & 1\\
   1 & 1 & 1 & 2
  \end{bmatrix},
  \begin{bmatrix}
   1 & 0 & 2 & 1\\
   1 & 1 & 0 & 1
  \end{bmatrix},
 \]
 \vspace{-0.25cm}
 \[
  \begin{bmatrix}
   1 & 0 & 1 & 2\\
   1 & 1 & 1 & 1
  \end{bmatrix},
  \begin{bmatrix}
   1 & 0 & 2 & 2\\
   1 & 1 & 2 & 0
  \end{bmatrix}
  \Big\}.
 \]
 Then $C$ has automorphism group $\s_5$ and, up to equivalence, $C$ is the unique neighbour-transitive code of size $5$ with minimum distance $4$ in $\W_3(3)$, as confirmed in MAGMA. Moreover $C$ has covering radius $3$, and is thus a maximal partial spread of $\W_3(3)$.
\end{example}

Next we give a construction for maximal partial spreads of $\W_3(q)$ of size $q^2-1$. Note that, by duality, such a partial spread is equivalent to a maximal partial ovoid of $\Qu_4(q)$ of size $q^2-1$ and the construction and examples given in Example~\ref{maxspreadconstr} are hence equivalent to those given in \cite{coolsaet2013known} (see, in particular, Theorem~4 and the Table on page 5 of \cite{coolsaet2013known}). Note also that for $q=5,7,11$ these examples were first found by Penttila with the aid of a computer. It is a conjecture of Thas \cite[Section~1.9.2]{thas2004symmetry} that these examples comprise all of the maximal partial ovoids of $\Qu_4(q)$ having size $q^2-1$.

\begin{example}\label{maxspreadconstr}
 The following is the dual of the construction considered in \cite{coolsaet2013known}. Let $V\cong\F_q^4$ be equipped with the symplectic form $f(x,y)=x_1y_2-x_2y_1-x_3y_4+x_4y_3$. Let $G$ be, as in Table~\ref{table:sharplytransgps}, a sharply transitive subgroup of $\SL_2(q)$ (represented as $2\times 2$ matrices) and let $C$ be the set of rank $2$ subspaces of $V$ (represented via the row-spaces of $2\times 4$ matrices)
 \[
  C=\{\begin{bmatrix} I & A \end{bmatrix}\mid A\in G\},
 \]
 where $I$ is the $2\times 2$ identity matrix. The row-space of $\begin{bmatrix} I & A \end{bmatrix}$ will be an isotropic $2$-space if and only if ${\rm det} I - {\rm det} A=0$, that is, ${\rm det} A=1$. Since $A\in \SL_2(q)$, we have that $C$ is indeed a set of lines of $\W_3(q)$.
\end{example}

\begin{table}
 \begin{center}
 \begin{tabular}{cc}
  $q$ & $G$ \\
  \hline
   $2$ & $\GL_1(4)$\\
   $3$ & $Q_8$\\
   $5$ & $2.\alt_4$\\
   $7$ & $2.\s_4$\\
   $11$ & $\SL_2(5)$\\
  \hline
 \end{tabular}
 \caption{Subgroups of $\SL_2(q)$ of order $q^2-1$ (see \cite[Chapter 3, Section 6]{suzuki1982group}). }
 \label{table:sharplytransgps}
 \end{center}
\end{table}

\begin{lemma}
 Let $C$ be one of the examples in Example~\ref{maxspreadconstr}, where $G$ is one of the groups listed in Table~\ref{table:sharplytransgps}. Then $C$ is a neighbour-transitive maximal partial spread of $\W_3(q)$.
\end{lemma}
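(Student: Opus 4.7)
The plan is to establish the three required properties in turn—partial spread, maximality, and neighbour-transitivity—with the third being the main content. For the partial-spread property, two codewords $[I \mid A]$ and $[I \mid B]$ of $C$ share a nonzero vector if and only if $w(A-B) = 0$ for some nonzero $w \in \F_q^2$, that is, if and only if $A-B$ is singular; but distinct elements $A, B \in G$ cannot send any nonzero $w$ to a common image by sharp transitivity, so $A - B \in \GL_2(q)$ and $C$ is a partial spread of size $q^2-1$. For maximality I would invoke the preamble to Example~\ref{maxspreadconstr}: the dual of $C$ inside $\Qu_4(q)$ is one of the maximal partial ovoids of size $q^2-1$ constructed in \cite{coolsaet2013known}.

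For neighbour-transitivity, my strategy is to produce two explicit families of automorphisms and then apply the line-version of Lemma~\ref{lemma:Xptrans}. Writing vectors of $V$ as pairs $(u,v) \in \F_q^2 \oplus \F_q^2$, set
\[
 g_M : (u, v) \mapsto (u, vM), \qquad h_M : (u, v) \mapsto (uM, vM) \qquad (M \in G).
\]
The key routine verification is that both preserve $f$: in block form $f$ is built from the $2 \times 2$ standard symplectic Gram matrix $J$, and the identity $M J M^T = (\det M) J$ combined with $\det M = 1$ (since $G \leq \SL_2(q)$) shows that $g_M, h_M \in \Sp_4(q) \leq \Aut(\W_3(q))$.

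Finally, $g_M$ sends $[I \mid A]$ to $[I \mid AM]$, so $\{g_M : M \in G\}$ realises the right-regular action of $G$ on $C$ and in particular is transitive on $C$. Fixing the codeword $\ell = [I \mid I]$, the map $h_M$ stabilises $\ell$ setwise and sends $[I \mid A]$ to $[I \mid M^{-1}AM]$, which lies in $C$ because $G$ is a group; and its induced action on the $q+1$ points $\langle (u,u) \rangle$ of $\ell$ agrees with the natural action of $G$ on $\pg_1(q)$, which is transitive since $G$ is transitive on $\F_q^2 \setminus \{0\}$. Combining these two transitivities, the line-version of Lemma~\ref{lemma:Xptrans} closes the proof. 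The only real obstacle is the $f$-invariance of $g_M$ and $h_M$; the minus signs in the definition of $f$ are engineered precisely so that this follows from $MJM^T = (\det M)J$, so no genuine difficulty is expected.
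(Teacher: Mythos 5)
Your proof is correct and follows essentially the same route as the paper: your $g_M$ and $h_M$ are exactly the block-diagonal automorphisms $g_{I,M}$ and $g_{M,M}$ used there, with transitivity on $C$ coming from the right-regular action of $G$ and transitivity on the points of $[\,I\mid I\,]$ coming from the (sharp) transitivity of $G$ on nonzero vectors. The only minor deviation is that you establish the partial-spread property directly from sharp transitivity (non-singularity of $A-B$) instead of identifying the construction with that of Coolsaet--De Beule--Siciliano via their Lemma~1(iv), though, like the paper, you still defer maximality to that reference.
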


\begin{proof}
 To see that $C$ is a partial spread, let $A$ and $B$ be distinct elements of $G$ and consider the matrix
 \[
  M=
  \begin{bmatrix}
   I & A\\
   I & B
  \end{bmatrix}.
 \]
 Now, $\begin{bmatrix} I & A \end{bmatrix}$ and $\begin{bmatrix} I & B \end{bmatrix}$ represent non-intersecting lines if and only if $M$ has full rank, which occurs if and only if $A-B$ has rank $2$. It thus follows from \cite[Lemma~1~(iv)]{coolsaet2013known} that the construction given in that paper is equivalent to the one given here. Hence, by \cite[Theorem~4]{coolsaet2013known}, $C$ is a maximal partial spread. Let
 \[
  g_{X,Y}=
  \begin{bmatrix}
   X & O\\
   O & Y
  \end{bmatrix},
 \]
 where $O$ is the $2\times 2$ matrix having all entries equal to $0$, and $X,Y\in G$. Then $g_{X,Y}$ is an automorphism of $\W_3(q)$ acting via right multiplication and maps $\begin{bmatrix} I & A \end{bmatrix}$ to $\begin{bmatrix} X & AY \end{bmatrix}$, which represents the same line as $\begin{bmatrix} I & X^{-1}AY \end{bmatrix}$. Hence, $g_{I,A^{-1}}$ maps the line $\begin{bmatrix} I & A \end{bmatrix}$ to $\begin{bmatrix} I & I \end{bmatrix}$, which implies that $\Aut(C)$ acts transitively on $C$. Also $g_{X,X}$ stabilises $\begin{bmatrix} I & I \end{bmatrix}$ and, since $G$ is sharply transitive, $X$ may be chosen so that $g_{X,X}$ maps any given vector $(a,b,a,b)$ in the row-space of $\begin{bmatrix} I & I \end{bmatrix}$ to $(1,0,1,0)$. Thus $\Aut(C)$ acts transitively on $C_1$ and hence $C$ is neighbour-transitive.
\end{proof}

\begin{lemma}\label{sharplytransNT}
 Suppose that $C$ is a neighbour-transitive code with minimum distance $4$ having size $q^2-1$ in $\W_3(q)$. Then $q=2,3,5,7$ or $11$ and $C$ is equivalent to a maximal partial spread as in Example~\ref{maxspreadconstr}.
\end{lemma}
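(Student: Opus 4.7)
By Lemma~\ref{ovoidorspread}, $C$ is either a maximal partial ovoid or a maximal partial spread of $\W_3(q)$. The self-duality of $\W_3(q)$ for $q$ even allows these two cases to be interchanged via the duality in $\Aut(\varGamma)$; for $q$ odd, the partial ovoid case has to be handled separately, either by invoking a known upper bound on the size of a partial ovoid of $\W_3(q)$ or by showing that the divisibility requirement $(q+1)(q^2-1)\mid|\Aut(C)|$ from Lemma~\ref{lemma:Xptrans} is incompatible with the action on a putative partial ovoid of size $q^2-1$. Assuming henceforth that $C$ is a partial spread, $G := \Aut(C)$ preserves the $2(q+1)$-point set $C_3$ of uncovered points.

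The geometric core of the argument is to identify $C_3$ with the point union of two mutually polar hyperbolic lines $W$ and $W^\perp$ of the underlying symplectic space $V\cong \F_q^4$. I would establish this by combining the maximality of $C$, which forces every line through any $p\in C_3$ to meet some codeword, with the transitive $G$-action on $C$, and conclude via a counting and linear-algebra argument that $C_3$ is the disjoint union of two hyperbolic lines of $\pg_3(q)$ whose spans are symplectic complements. Fixing a symplectic basis so that $W=\langle e_1,e_2\rangle$, $W^\perp=\langle e_3,e_4\rangle$ and $f$ is as in Example~\ref{maxspreadconstr}, each codeword $\ell\in C$ is disjoint from $W\cup W^\perp$ and so is the row-space of a unique matrix $[I\mid A]$; isotropy gives $A\in\SL_2(q)$, and pairwise disjointness of codewords (via \cite[Lemma~1]{coolsaet2013known}) gives that the set $\mathcal{A}\subseteq\SL_2(q)$ so obtained has $A-B$ invertible for all distinct $A,B\in\mathcal{A}$ and hence acts sharply transitively on $\F_q^2\setminus\{0\}$.

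To finish, I would upgrade $\mathcal{A}$ from a sharply transitive set to a sharply transitive subgroup of $\SL_2(q)$. The setwise stabiliser in $G$ of the pair $\{W,W^\perp\}$ acts on codewords via $[I\mid A]\mapsto [I\mid X^{-1}AY]$ with $(X,Y)\in\SL_2(q)\times\SL_2(q)$, modulo field automorphisms and the swap $W\leftrightarrow W^\perp$; transitivity of $G$ on $C$ combined with the neighbour-transitive action of $G_\ell$ on the points of a codeword $\ell$ yields enough automorphisms of the form $(I,A)$ that $\mathcal{A}$ is closed under multiplication and inversion, so $\mathcal{A}\leq\SL_2(q)$ is a subgroup. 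Zassenhaus's classification of sharply transitive subgroups of $\SL_2(q)$ of order $q^2-1$ (equivalently, of finite near-fields of order $q^2$, see \cite[Chapter~3, \S6]{suzuki1982group}) then yields exactly $q\in\{2,3,5,7,11\}$ with the groups listed in Table~\ref{table:sharplytransgps}, so $C$ is equivalent to the corresponding member of Example~\ref{maxspreadconstr}. The principal obstacle I anticipate is the canonical identification of the two hyperbolic lines comprising $C_3$ without appealing to the (still-open) Thas conjecture; the subsequent subgroup upgrade should then follow rather directly from the abundance of automorphisms provided by neighbour-transitivity on $C_1$.
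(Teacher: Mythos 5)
Your outline matches the paper's at the top level (reduce to a partial spread, coordinatise codewords as row-spaces $[I\mid A]$ with $\mathcal{A}\subseteq\SL_2(q)$ a sharply transitive set, upgrade $\mathcal{A}$ to a subgroup, invoke the classification of such subgroups), but three steps are genuine gaps rather than routine details. First, the identification of $C_3$ with two mutually polar hyperbolic lines and of $C$ with $\{[I\mid A]\mid A\in\mathcal{A}\}$ cannot be dispatched with ``a counting and linear-algebra argument'': this is precisely the content of Theorems~2 and~5 of \cite{coolsaet2013known} (in dual form), which the paper cites; your sketch gives no indication of how the argument would go. Your closing worry about the Thas conjecture is also misplaced: the structure theorem (sharply transitive \emph{set}) is proved in \cite{coolsaet2013known}, and what remains open is only whether every such set is a group --- which is exactly the point at which neighbour-transitivity is used.

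Second, you never pin down $q$. The paper first applies the dual of the main result of \cite{de2013large} to conclude that $q$ is \emph{prime}; this is what eliminates the field automorphisms you wave at with ``modulo field automorphisms''. Without it, $\Aut(C)$ could act by $A\mapsto X^{-1}A^{\phi}Y$ and the orbit of $I$ need not be multiplicatively closed. Third, your mechanism for the subgroup upgrade is wrong as stated: the stabiliser in $\SL_2(q)\times\SL_2(q)$ of the codeword $[I\mid I]$ is the diagonal $\{(X,X)\}$, so neighbour-transitivity on the points of that line only produces a subgroup \emph{normalising} $\mathcal{A}$ (acting by conjugation), not right-translations $(I,A)$. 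Elements with $X^{-1}Y=A$ come from transitivity of $\Aut(C)$ on $C$ itself, not on $C_1$, and one must then argue from $X^{-1}\mathcal{A}Y=\mathcal{A}$ together with $I\in\mathcal{A}$ that $\mathcal{A}$ is closed. The paper's proof runs: Tallini's bound \cite{tallini1991blocking} (or duality for $q$ even) to force a partial spread; \cite{de2013large} for $q$ prime; \cite[Theorems~2 and~5]{coolsaet2013known} for the coordinatisation and for $\Aut(C)\leq\SL_2(q)\wr\s_2$; and then the fact that $C$ is the $\Aut(C)$-orbit of $[I\mid I]$ to force $\mathcal{A}$ to be a group, whence Table~\ref{table:sharplytransgps} gives $q\in\{2,3,5,7,11\}$.
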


\begin{proof}
 First, $C$ is either a partial spread or a partial ovoid of $\W_3(q)$. If $q$ is even then, by duality, we may assume that $C$ is a partial spread. If $q$ is odd then, by \cite{tallini1991blocking}, a partial ovoid has size at most $q^2-q+1$, which is less than $q^2-1$ for all $q\geq 3$, and hence we may again assume that $C$ is a partial spread. The dual of the main result of \cite{de2013large} then implies that $q$ is prime. By \cite[Theorem~5]{coolsaet2013known} we have that $C$ as in Example~\ref{maxspreadconstr} except that $G$ may be a subset, rather than a subgroup, of $\SL_2(q)$. By the dual of \cite[Theorem~2]{coolsaet2013known}, $\Aut(C)$ fixes the sub-quadrangle of $\W_3(q)$ consisting of the dual grid induced by the pair of lines $\begin{bmatrix} I & O \end{bmatrix}$ and $\begin{bmatrix} O & I \end{bmatrix}$. Hence, $q$ prime implies that $\Aut(C)$ is contained in the group $\SL_2(q)\wr \s_2$ generated by the elements 
 \[
  g_{X,Y}=
  \begin{bmatrix}
   X & O\\
   O & Y
  \end{bmatrix},
 \]
 where $X,Y\in \SL_2(q)$, and the involution 
 \[
  \begin{bmatrix}
   O & I\\
   I & O
  \end{bmatrix}.
 \]
 Since we may assume that $\begin{bmatrix} I & I \end{bmatrix}$ is in $C$, and $C$ is then the orbit of $\begin{bmatrix} I & I \end{bmatrix}$ under $\Aut(C)$, it follows that $G$ must be a group. Hence the result holds.
\end{proof}

We may now prove Theorem~\ref{mainresult}.

\begin{proof}[Proof of Theorem~\ref{mainresult}]
 First, by \cite[Theorem~4.3 and Remark~4.4]{de2005partial}, if $C$ has size $q+1$ then $C$ consists of the points of a hyperbolic line, and, by Proposition~\ref{hyplineNT}, such a code is neighbour-transitive. Hence part 4 holds. Also, part 5 is exhibited by Example~\ref{smallqequals3}. If $|C|=q^2-1$ then, by Lemma~\ref{sharplytransNT}, part 3 holds. Thus we may assume that $C$ has size at least $q^2$. It then follows from \cite[2.7.1]{paynefinite} that $C$ is contained in a uniquely defined spread. If $C$ has size $q^2$ then part $2$ holds, and if $C$ is a spread then, by Theorem~\ref{NTovoidsspreads}, $C$ is a regular spread, and part 1 holds.
\end{proof}

\end{document}